\newtheorem{theorem}{Theorem}[section]
\newtheorem{lemma}[theorem]{Lemma}
\theoremstyle{remark}
\newtheorem*{remark}{{\bf Remark}}
\numberwithin{equation}{section}
\newtheorem*{definition}{{\bf Definition}}
\newtheorem{claim}{Claim}
\DeclareMathOperator{\diam}{diam}
\DeclareMathOperator{\dist}{dist}
\begin{document}


\title[Bi-Lipschitz embeddability of the Grushin plane into  Euclidean space]{Bi-Lipschitz embeddability of the Grushin plane into  Euclidean space}

\author[J. Seo]{Jeehyeon Seo}
\thanks{This work was supported by US National Science Foundation Grant  DMS-0901620.}
\keywords{Coloring map ; Bi-Lipschitz embedding ; The Grushin Plane.}
\subjclass[2010]{Primary : 30L05 ; Secondary : 53C17}

\address
{Department of Mathematics, University of Illinois at Urbana Champaign\\
1409 West Green Street, Urbana, IL, 61801}
\email {seo6@illinois.edu}

\begin{abstract}
Many sub-Riemannian manifolds like the Heisenberg group do not admit bi-Lipschitz embedding into any Euclidean space. In contrast, the Grushin plane admits a bi-Lipschitz embedding into some Euclidean space. This is done by extending a bi-Lipschitz embedding of the singular line,
using a Whitney decomposition of its complement.
\end{abstract}


\maketitle


\section{Introduction}
Let $(X, d_X)$ and $(Y, d_Y)$ be metric spaces. A map $f:X\longrightarrow Y$ is  called bi-Lipschitz, if there is $L\geq1$ such that
\begin{equation*}
\dfrac{1}{L}\cdot d_X (x, y) \leq d_Y (f(x), f(y)) \leq L\cdot d_X(x, y)
\end{equation*}
for all $x$, $y \in X$. Many analytic properties are invariant under bi-Lipschitz mappings. Thus, if we can characterize spaces that bi-Lipschitzly embed into Euclidean space, then  we can know clearly for which  spaces the resulting analysis is genuinely new and for which ones the analysis can be seen as just Euclidean analysis on a suitable subset. Since  bi-Lipschitz mappings are related to problems about differentiability, rectifiability, etc., the study of bi-Lipschitz mappings is a good starting point to observe the class of abstract metric spaces.  Bonk, Heinonen and Saksman \cite{Bonk} showed that the problem of which metric spaces are bi-Lipschitz equivalent to finite dimensional Euclidean space, is closely related to the quasiconformal Jacobian problem, which has been studied for a long time. Also, in computer science, the bi-Lipschitz embedding problem is related to the Goemans-Linial conjecture. Recently, Cheeger and Kleiner gave a counterexample for the Goemans-Linial conjecture \cite{Kleiner}.  Indeed, they showed that the Heisenberg group does not admit a bi-Lipschitz embedding into $L^1$. 

There are well-known results concerning the bi-Lipschitz embedding problem when the target space is  Euclidean space. Pansu \cite{Pansu} proved that a version of the Rademacher theorem holds for Lipschitz maps between two Carnot groups: Every Lipschitz mapping between two Carnot groups is almost everywhere differentiable. Moreover, the differential is a Lie group homomorphism almost everywhere. Semmes \cite{Semmes} observed that Pansu's differentiability theorem can be applied to get non-embedding results. He showed that any Carnot group does not admit bi-Lipschitz embedding into Banach space with the Radon-Nikod\'{y}m property. In particular, the Heisenberg group does not embed bi-Lipschitzly into any Euclidean space. 

Cheeger \cite{Cheeger} proved remarkable extension of the Rademacher theorem in doubling $p$-Poincar\'e spaces: If $(X, d, \mu)$ is a doubling metric measure space that satisfies a $p$-Poincar\'{e} inequality for some $p \geq 1$, then $(X, d, \mu)$  has a strong measurable differentiable structure, i.e. a countable collection of coordinate patches $\{(X_{\alpha},\pi_{\alpha}) \}$ that satisfy the following conditions:
\begin{enumerate}
 \item Each $X_{\alpha}$ is a measurable subset of $X$ with positive measure and the union of  the $X_{\alpha}$'s has full measure in $X$.
 \item Each $\pi_{\alpha}$ is a $N(\alpha)$-tuple of Lipschitz functions, for some $N(\alpha) \in \mathbb{N}$, where $N(\alpha)$ is bounded from above independently of $\alpha$.
\item Given a Lipschitz function $f:X \longrightarrow\mathbb{R}$, there exists an $L^{\infty}$ function $df^{\alpha}: X_{\alpha} \longrightarrow \mathbb R^{N(\alpha)}$ so that 
\begin{equation*}
\limsup_{y\rightarrow x}\dfrac{|f(y)-f(x)-df^{\alpha}(x) \cdotp (\pi_{\alpha}(y)-\pi_{\alpha}(x))|}{d(x, y)} = 0\; \text{ for}\; \mu-\text{a.e}\; x\in X_{\alpha}.
\end{equation*}
\end{enumerate}
Also, Cheeger's work contains non-embedding result: If X admits a bi-Lipschitz embedding into some finite-dimensional Euclidean space, then at almost every point in a coordinate patch, $X_{\alpha}$, the tangent cone at that point is bi-Lipschitz equivalent to $\mathbb R^{N(\alpha)}$. We can deduce from Cheeger's theorem the known non-embedding results both for the Carnot-Carath\'eodory spaces and for Laakso space. Let us consider Cheeger's theorem on the Heisenberg group $\mathbb{H}$. The Heisenberg group can be equipped with a strong measurable differentiable structure with a single coordinate patch $(\mathbb{H}, \pi_1, \pi_2)$, where $\pi_1(x,y,t)=x$ and  $\pi_2(x,y,t)=y$.  If we assume that the Heisenberg group admits a bi-Lipschitz embedding into some finite dimensional Euclidean space, then every tangent cone at almost every point in $\mathbb {H}$ is bi-Lipschitz equivalent to $\mathbb{R}^2$, which is impossible.

In contrast to the Heisenberg group, Cheeger's theorem does not tell us anything about whether or not the Grushin plane locally embeds into some Euclidean space. The Grushin plane $\mathbb{G}$ with Lebesgue measure is a locally doubling metric measure space satisfying  locally $p$-Poincar\'{e} inequality for any $p\geq 1$ (\cite{sobolev}, \cite{Jerison}). However, self similarity of the Grushin plane allows us to consider Cheeger's coordinate patch $(K, \pi_1, \pi_2)$, where $\pi_1(x,y)=x$ and  $\pi_2(x,y)=y$ for any compact $K$ in $\mathbb{G}$. Since every tangent cone to $K$ at almost every point in $K$ is bi-Lipschitz equivalent to $\mathbb{R}^2$, we cannot conclude non-embedding result from Cheeger's theorem like the case of the Heisenberg group.

Recently, Le Donne \cite{LeDonne} has shown that each sub-Riemannian manifold embeds in some Euclidean space preserving the length of all curves. 
In his result, the metric in the target is the path metric defined by infimizing the lengths of curves contained in image set. He proves that the Grushin plane can be isometrically embedded into $\mathbb R^3$ with path metric. However, this result does not answer the question about whether or not the Grushin plane embeds  bi-Lipschitzly into Euclidean space.

In this paper, we utilize the fact that the singular line admits a bi-Lipschitz embedding into $\mathbb{R}^3$. We can extend this as a Lipschitz map using a Whitney decomposition. Also, the Grushin plane is a Riemannian manifold almost everywhere. Moreover, local parts in the Grushin plane except the singular line embed bi-Lipschitzly into $\mathbb{R}^2$ with uniform bi-Lipschitz constant. This fact enables us to consider a co-Lipschitz extension on the Grushin plane. 

\begin{theorem}
The Grushin plane equipped with the Carnot-Carath\'{e}odory distance admits a bi-Lipschitz embedding into some Euclidean space. The bi-Lipschitz constant and dimension of the Euclidean space depends only on a bi-Lipschitz constant of a bi-Lipschitz embedding of the singular line into $\mathbb{R}^3$.
\end{theorem}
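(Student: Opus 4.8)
The plan is to combine the given embedding of the singular line with the fact that, away from that line, the Grushin plane is a self-similar Riemannian surface whose local pieces are uniformly bi-Lipschitz to flat domains. Write $\mathbb G=(\mathbb R^2,d)$ with $d$ the Carnot--Carath\'eodory distance and $L=\{x=0\}$ the singular line; off $L$, $d$ is the length metric of $dx^2+x^{-2}\,dy^2$, and $d$ is invariant under the dilations $\delta_\lambda(x,y)=(\lambda x,\lambda^2 y)$. Let $\varphi\colon(L,d)\to\mathbb R^3$ be $L_0$-bi-Lipschitz. I will produce
\[
F=(f,\rho,h_1,\dots,h_N)\colon\mathbb G\longrightarrow\mathbb R^3\times\mathbb R\times(\mathbb R^2)^N,\qquad N=N(L_0),
\]
where $f$ is a Whitney extension of $\varphi$, $\rho(p)=d(p,L)$, and the $h_c$ are glued from local bi-Lipschitz charts of the Riemannian part of $\mathbb G$ organized by a coloring; each coordinate will be $O_{L_0}(1)$-Lipschitz, and the content is the co-Lipschitz estimate.

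First I would fix a Whitney-type decomposition $\{Q_j\}_{j\in J}$ of $(\mathbb G\setminus L,d)$: a tiling by ``dyadic Grushin boxes'' with $\diam_d Q_j\asymp\dist_d(Q_j,L)=:s_j$, bounded overlap of controlled dilates, and a shadow $p_j\in L$ with $d(p_j,Q_j)\asymp s_j$. Near $L$ these boxes are Euclidean-anisotropic, essentially $\{|x-x_j|\lesssim s_j,\ |y-y_j|\lesssim s_j^2\}$ with $|x_j|\asymp s_j$, which is why one must use this decomposition rather than the Euclidean Whitney cubes of $\mathbb R^2\setminus\{x=0\}$. Since $(\mathbb G,d)$ is a doubling metric space, I would fix a coloring $\chi\colon J\to\{1,\dots,N\}$, $N=N(L_0)$, such that same-colored boxes have their enlargements $Q_j^{(2M)}$ — the union of boxes within combinatorial distance $2M=2M(L_0)$ of $Q_j$ in the adjacency graph — pairwise disjoint. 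By dilation invariance, $(Q_j^{(2M)},s_j^{-1}d)$ is isometric (up to finitely many choices) to $(R_M,d)$ for a fixed region $R_M\subset\{2^{-2M}\le|x|\le 2^{2M}\}$ a definite distance from $L$; there $dx^2+x^{-2}dy^2$ is uniformly elliptic, so $R_M$ is $C_1(L_0)$-bi-Lipschitz to a bounded flat domain. Let $\Psi_j\colon Q_j^{(2M)}\to\mathbb R^2$ be $s_j$ times such a chart, normalized to vanish at a basepoint, so $\Psi_j$ is $C_1(L_0)$-bi-Lipschitz; pick cutoffs $\eta_j$ equal to $1$ on $Q_j^{(M)}$, supported in $Q_j^{(2M)}$, with Lipschitz constant $\lesssim 1/s_j$; and set $h_c=\sum_{\chi(j)=c}\eta_j\Psi_j$, which has at most one nonzero term at each point. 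For $f$ I would use a \emph{separate}, tight partition of unity $\{\phi_j\}$ subordinate to small dilates of the $Q_j$, with $f=\sum_j\phi_j\,\varphi(p_j)$ on $\mathbb G\setminus L$ and $f=\varphi$ on $L$. The standard Whitney estimates — the differential of $f$ on $Q_j$ has norm $\lesssim s_j^{-1}\max_k|\varphi(p_k)-\varphi(p_j)|\lesssim_{L_0}1$, and $|f(p)-\varphi(\bar p)|\lesssim L_0\,d(p,L)$ with a \emph{universal} constant because $\{\phi_j\}$ is tight — together with the fact that $(\mathbb G,d)$ is a length space, show that $f$, $\rho$ and all $h_c$ are Lipschitz with constants depending only on $L_0$.

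The co-Lipschitz bound is the heart of the matter. Fix $p\ne q$, put $D=d(p,q)$, assume $d(p,L)\ge d(q,L)$, and let $\bar p,\bar q\in L$ be nearest feet. With a universal small $c_1$ and a small $c_2=c_2(L_0)$ I would run three cases. (I) If $d(q,L)\ge c_1^{-1}D$ then $q$ lies in $Q_j^{(M)}$ for the box $Q_j\ni p$, so $\eta_j\equiv1$ at $p$ and $q$ and $|h_{\chi(j)}(p)-h_{\chi(j)}(q)|=|\Psi_j(p)-\Psi_j(q)|\ge C_1(L_0)^{-1}D$. (II) If $d(p,L)\le c_2 D$ then $d(\bar p,\bar q)\ge(1-2c_2)D$ while $|f(p)-\varphi(\bar p)|+|f(q)-\varphi(\bar q)|\lesssim L_0\,c_2 D$, so choosing $c_2\asymp L_0^{-2}$ gives $|f(p)-f(q)|\ge\tfrac1{4L_0}D$. (III) Otherwise $c_2 D<d(p,L)<(c_1^{-1}+1)D$; if moreover $d(q,L)\le\tfrac{c_2}{2}D$ the heights separate directly, $\rho(p)-\rho(q)=d(p,L)-d(q,L)\ge\tfrac{c_2}{2}D$; and if $d(q,L)>\tfrac{c_2}{2}D$ then both heights are $\asymp_{L_0}D\asymp s_j$, so $D\le C(L_0)s_j$ forces $q\in Q_j^{(M)}$ — which is what fixes $M=M(L_0)$ — and again $|h_{\chi(j)}(p)-h_{\chi(j)}(q)|\ge C_1(L_0)^{-1}D$. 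Thus $|F(p)-F(q)|\gtrsim_{L_0}D$ in all cases, so $F$ is bi-Lipschitz with constant $\Lambda(L_0)$ into $\mathbb R^{4+2N(L_0)}$.

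I expect the real difficulty to lie in two intertwined bookkeeping points rather than in any single estimate. First, the height coordinate $\rho=d(\cdot,L)$ is genuinely indispensable: in ``radially stacked'' configurations — $p$ far from $L$, $q$ near $L$, and $\bar p$ close to $\bar q$ — the map $f$ and all the $h_c$ are useless, and only $\rho$ separates $p$ from $q$. Second, the constants must close up without circularity: case (III) forces the cutoff width $M$ to grow like $\log L_0$, and reusing those wide cutoffs for $f$ would spoil its resolution near $L$ and break case (II); building $f$ from an \emph{independent} tight partition of unity removes the loop, after which $c_2\asymp L_0^{-2}$, $M\asymp\log L_0$, $C_1(L_0)\asymp L_0^{2}$ and the coloring number $N(L_0)$ are all consistent. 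The remaining ingredients — the $d$-adapted Whitney decomposition, the coloring from doubling, the uniform ellipticity of $dx^2+x^{-2}dy^2$ on the dilated boxes (hence the uniform bi-Lipschitz equivalence of the charts to flat domains), and the Whitney estimates for $f$ — are routine.
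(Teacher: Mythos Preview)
Your proposal is correct and follows essentially the same architecture as the paper: a Whitney extension $f$ of the singular-line embedding, the height coordinate $\rho=d(\cdot,L)$, and colored local charts, with the co-Lipschitz estimate split into the same three regimes (both points near $L$; one near and one far; both far relative to $D$). The one technical difference worth noting is how the ``W-local'' case is handled: you widen the chart cutoffs to combinatorial radius $M(L_0)\asymp\log L_0$ so that $q$ always sits where $\eta_j\equiv 1$, which is what forces you to introduce a second, tight partition of unity for $f$ to avoid the circularity you flag; the paper instead keeps a single tight partition of unity (supported on $Q^{**}$) and normalizes each local chart $\widehat h$ so that $\widehat h(Q^*)$ lies in an annulus $B(0,M_1\diam_{\mathbb G}Q)\setminus B(0,M_1^{-1}\diam_{\mathbb G}Q)$ --- then when $q\notin Q^{**}$ the chart vanishes at $q$ but $|\widetilde h_{Q^*}(p)|\gtrsim\diam_{\mathbb G}Q\gtrsim_L d_{\mathbb G}(p,q)$ already, so no wide cutoffs are needed and the circularity never arises.
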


\section*{Acknowledgments}
The author would like to thank Jeremy Tyson and John Mackay for their advice and careful reading.

\section{Preliminaries}
In this section, we summarize basic definitions, some properties of the Grushin plane and Assouad's embedding theorem. 

\noindent{\bf The Grushin plane}. The Grushin plane is the plane $\mathbb{R}^2$ with the horizontal distribution spanned by two vector fields
\begin {center}
$X_1= \dfrac{\partial}{\partial{x}}\;\;\;\; \text{and}\;\;\;\; X_2 = x \cdot \dfrac{\partial}{\partial{y}}$.
\end {center}

\noindent{\bf The Carnot-Carath\'{e}odory distance}. The Carnot-Carath\'{e}odory distance on the Grushin plane, $d_{\mathbb{G}}(p, q)$, is defined by the infimum of the lengths of horizontal curves joining $p$ to $q$ for any two points in $\mathbb{G}$. The horizontal curve means that curve is tangent to the horizontal distribution. Since $[X_1, X_2] =\dfrac{\partial}{\partial{y}}$, the Carnot-Carath\'{e}odory distance between two points is finite by Chow's theorem.

\noindent{\bf Distance estimates}. Since the Grushin plane has dilation $\delta_{\lambda}(x, y)=(\lambda x, {\lambda}^2y)$, we have the following distance estimate along the singular line $x=0$:
\begin{equation}
 d_G((0,y_1), (0,y_2)) =\sqrt{|y_1-y_2|} d_G((0,0),(0,1)).
\end{equation}
Also, the associated sub-Riemannian metric is in fact Riemannian on the complement of the singular line $x=0$, and is equal to $ ds^2= dx^2+x^{-2}dy^2$.
Thus, we can calculate following distance estimates for any two points $p=(x_1,y_1)$ and $q=(x_2, y_2)$ on the complement of the singular line $x=0$.
\begin{equation}
\frac{1}{2}\left(|x_1-x_2|+\dfrac{|y_1-y_2|}{ \sqrt{{\min(|x_1|,|x_2|)}^2+4|y_1-y_2|}}\right)\leq d_G(p,q) \leq  4(|x_1-x_2|+\sqrt{|y_1-y_2|}). 
\end{equation}

The upper bound in (2.2)  comes from triangle inequality \cite{Bell}. To get the lower bound in (2.2), we use Riemannian metric. Let $\gamma(t) = (X(t), Y(t))$ be a parametrized horizontal curve joining $p$ to $q$ where $t \in [0,1]$. Then,
\begin{equation*}
 \text{length}(\gamma)=\int_{0}^{1} \sqrt {{\acute{X(t)}}^2+\dfrac{{\acute{Y(t)}}^2}{X(t)^2}}dt.
\end{equation*}
Thus, $(*)$ length($\gamma) \geq |x_2-x_1|$. If there exists $K$ such that $|X(t)| \leq K$ for
all $t \in [0,1]$, then length ($\gamma) \geq K^{-1}|y_2-y_2|$. Otherwise, $|X(t_{0})|> K$ for some $t_{0} \in [0,1]$ and length($\gamma$) $\geq$ length($\widetilde{\gamma}) \geq \max \{ |X(t_{0})-x_1|, |X(t_{0})-x_2|\} \geq K- \min\{|x_1|, |x_2|\}$ where $\widetilde{\gamma}$ is a subcurves of $\gamma$ joining $p$ to $(X(t_{0}), Y(t_{0}))$ or $q$ to $(X(t_{0}), Y(t_{0}))$. Thus,
\begin{equation*}
(**)\;\text{length}(\gamma) \geq \sup_{K> \max\{|x_1|, |x_2|\}} \min \{ K- \min\{|x_1|, |x_2|\}, \; K^{-1} |y_2-y_1|\}. 
\end{equation*}

When we choose $K=\sqrt{{\min(|x_1|, |x_2|)}^2+4|y_1-y_2|}$ and average $(*)$ and $(**)$, then we get the preceding distance estimates (2.2).
 
\begin{theorem}[Assouad \cite{Assouad}]
Each snowflaked version of a doubling metric space admits a bi-Lipschitz embedding in some Euclidean space. Moreover, when there is a bi-Lipschitz embedding from $(\mathbb{R}, {d_E}^p)$ into $(\mathbb {R}^n, d_E)$ where $0 < p <1$, the minimal dimension of Euclidean space is the smallest integer strictly greater than $ \frac{1}{p} $.
\end{theorem}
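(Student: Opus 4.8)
The plan is to prove the first assertion by an explicit construction and the second by a finer analysis of the line. \emph{The construction.} Fix $p\in(0,1)$ and a doubling metric space $(X,d)$ with doubling constant $D$, and write $\rho=d^{p}$. For every $n\in\mathbb{Z}$ I would choose a maximal $2^{-n}$-separated set $S_{n}\subseteq X$, so that every point of $X$ lies within $2^{-n}$ of $S_{n}$. Doubling then yields, with constants depending only on $D$: every ball of radius $C2^{-n}$ meets $S_{n}$ in at most $N_{0}(D,C)$ points, and the graph on $S_{n}$ joining pairs at distance $<20\cdot 2^{-n}$ has bounded degree, hence admits a proper colouring $\chi_{n}\colon S_{n}\to\{1,\dots,M\}$, $M=M(D)$, for which distinct same-colour points are more than $20\cdot 2^{-n}$ apart. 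To $q\in S_{n}$ I attach the bump $\phi_{n,q}(x)=\max\{0,\tfrac32 2^{-n}-d(x,q)\}$: it is $1$-Lipschitz for $d$, supported in $B(q,\tfrac32 2^{-n})$, and at least $\tfrac12 2^{-n}$ at the $S_{n}$-point nearest $x$. Since same-colour bumps have disjoint supports, $f_{n,m}:=\max\{\phi_{n,q}:q\in\chi_{n}^{-1}(m)\}$ is again $1$-Lipschitz with range $[0,\tfrac32 2^{-n}]$. Fixing a basepoint $x_{*}$, I set $g_{n,m}(x)=2^{n(1-p)}\bigl(f_{n,m}(x)-f_{n,m}(x_{*})\bigr)$, which is $2^{n(1-p)}$-Lipschitz for $d$ with $|g_{n,m}|\le\tfrac32 2^{-np}$; since the terms are $O(2^{n(1-p)}d(x,x_{*}))$ as $n\to-\infty$ and $O(2^{-np})$ as $n\to+\infty$, the family is summable. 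To reach \emph{finite} dimension I would fold scales periodically: fix a large integer $K=K(p,D)$ and, for $r\in\{0,\dots,K-1\}$ and $m\in\{1,\dots,M\}$, set $G_{r,m}(x)=\sum_{n\equiv r\ (\mathrm{mod}\ K)}\bigl(g_{n,m}(x)-g_{n,m}(x_{*})\bigr)$, obtaining $\Phi=(G_{r,m})_{r,m}\colon X\to\mathbb{R}^{KM}$.

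\emph{The two bounds.} For the upper bound, at each scale $n$ only $O_{D}(1)$ colours $m$ have $g_{n,m}$ nonzero at $x$ or at $y$, each such term is $\le\min\{2^{n(1-p)}d(x,y),\,\tfrac32 2^{-np}\}$, and summing over $n$ with the scales split at $2^{-n_{0}}\approx d(x,y)$ leaves two geometric series each of order $d(x,y)^{p}$, so $\|\Phi(x)-\Phi(y)\|_{1}\lesssim_{D}\rho(x,y)$. The lower bound is the crux. Given $x\neq y$, take $n_{0}$ with $2^{-n_{0}-1}\le d(x,y)\le 2^{-n_{0}}$ and pass to the \emph{finer} scale $n_{*}=n_{0}+c_{0}$ for a fixed $c_{0}$ (one checks $c_{0}=4$ works). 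With $q$ the $S_{n_{*}}$-point nearest $x$, one has $\phi_{n_{*},q}(x)\ge\tfrac12 2^{-n_{*}}$, while the choice of $c_{0}$ forces $d(y,q)\ge\tfrac32 2^{-n_{*}}$, so $\phi_{n_{*},q}(y)=0$; and the same choice together with the $20\cdot 2^{-n}$ separation forces every same-colour $q'$ with $y\in B(q',\tfrac32 2^{-n_{*}})$ to equal $q$, whence $f_{n_{*},m_{*}}(y)=0$ for $m_{*}=\chi_{n_{*}}(q)$. Thus scale $n_{*}$ alone gives $|g_{n_{*},m_{*}}(x)-g_{n_{*},m_{*}}(y)|\ge\tfrac12 2^{-n_{*}p}\gtrsim_{p}\rho(x,y)$, and inside the folded coordinate $G_{r_{*},m_{*}}$ the other scales contribute a geometric series of ratio $2^{-K\min\{p,1-p\}}$ times $\rho(x,y)$, which for $K$ large is at most half of the scale-$n_{*}$ term, so $\|\Phi(x)-\Phi(y)\|_{\infty}\gtrsim_{p,D}\rho(x,y)$. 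I expect the main obstacle to be exactly this: the bump radius $\tfrac32 2^{-n}$, the separation constant $20\cdot 2^{-n}$, and the offset $c_{0}$ must be calibrated together so that at scale $n_{*}$ the point $x$ lies strictly inside a bump whose whole colour class misses $y$, and $K$ must then be taken large enough that cross-scale cancellation in the folding cannot close this gap. Granting this, the first sentence follows with dimension $KM$ and bi-Lipschitz constant controlled by $D$ and $p$.

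\emph{The sharp dimension.} For $X=\mathbb{R}$ the nets can be taken to be the progressions $2^{-n}\mathbb{Z}$ and the construction above optimised down to dimension $N=\lfloor 1/p\rfloor+1$; more transparently, I would build, by an inductive self-similar procedure generalising the von Koch snowflake arc, an Ahlfors $(1/p)$-regular Jordan arc in $\mathbb{R}^{\lceil 1/p\rceil}$ --- in $\mathbb{R}^{1/p+1}$ when $1/p\in\mathbb{Z}$ --- that is bi-Lipschitz equivalent to $(\mathbb{R},d_{E}^{p})$, which gives the upper bound. For the lower bound, a bi-Lipschitz copy of $(\mathbb{R},d_{E}^{p})$ in $\mathbb{R}^{m}$ is Ahlfors $(1/p)$-regular, so $m\ge 1/p$ by comparing Hausdorff dimensions, settling the case $1/p\notin\mathbb{Z}$. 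The remaining case $1/p=k\in\mathbb{Z}$ is a second, more delicate obstacle: one must rule out an Ahlfors $k$-regular Jordan arc $A\subseteq\mathbb{R}^{k}$, which I would try to do by combining Ahlfors regularity ($\mathcal{H}^{k}(A\cap B(x,r))\approx r^{k}$ everywhere) with connectedness and the Lebesgue density theorem to force $A$ to contain an open set, contradicting that $A$ is a bi-Lipschitz image of a line. Assembling the two bounds then yields the stated minimal dimension.
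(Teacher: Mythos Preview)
The paper does not prove this theorem at all: it is quoted as Assouad's result and used as a black box (only the consequence that the singular line embeds bi-Lipschitzly into $\mathbb{R}^3$ is needed). So there is no ``paper's own proof'' to compare against; what you have written is an independent attempt at Assouad's theorem itself.

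Your construction for the first assertion is the standard one and is correctly set up: the calibration of the bump radius $\tfrac32\,2^{-n}$, the same-colour separation $20\cdot 2^{-n}$, and the offset $c_0=4$ does give the single good scale $n_*$, and your geometric-series estimate with ratio $2^{-K\min(p,1-p)}$ is the right way to kill cross-scale interference after folding. This part would go through.

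For the sharp-dimension claim, your non-integer lower bound via Hausdorff dimension is fine, and the upper bound via a self-similar snowflake-type arc is the right idea. The one genuine gap is the integer case $1/p=k$. Ahlfors $k$-regularity of $A\subset\mathbb{R}^k$ together with connectedness does \emph{not} force $A$ to contain an open set: Osgood-type Jordan arcs in $\mathbb{R}^k$ can have positive Lebesgue measure and empty interior, so that step as stated would fail. The argument that does work is a blow-up at a Lebesgue density point $x_0=f(t_0)$: the rescalings $f_r(s)=r^{-1}\bigl(f(t_0+r^ks)-f(t_0)\bigr)$ are uniformly bi-Lipschitz from $(\mathbb{R},|\cdot|^{1/k})$ into $\mathbb{R}^k$, and along a subsequence they converge to some $f_0$ whose image is, on the one hand, bi-Lipschitz equivalent to the snowflaked line, and on the other hand equal to the weak tangent of $A$ at $x_0$, which the density-$1$ condition forces to be all of $\mathbb{R}^k$. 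That yields a bi-Lipschitz equivalence between a topological line and $\mathbb{R}^k$, a contradiction for $k\ge 2$. Replacing your ``contain an open set'' step with this tangent argument closes the gap.
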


Assouad embedding theorem tells us that the singular line $ A=\{x=0\}$ of the Grushin plane $\mathbb{G}$ equipped with Carnot-Carath\'{e}dory distance admits a bi-Lipschitz embedding $f$ into $\mathbb{R}^3$ because the Carnot-Carath\'{e}dory metric on the singular line is comparable to the square root of Euclidean metric (see (2.1)). In this paper, we denote by $L$ a bi-Lipschitz constant of a bi-Lipschitz embedding $f: (A, d_{\mathbb G}) \longrightarrow (\mathbb{R}^3, d_E)$. Thus,
\begin{equation}
\dfrac{1}{L} \cdot {d_{\mathbb{G}}(p, q)} \leq |f(p)-f(q)| \leq L \cdot {d_\mathbb{G}(p, q)},
\end{equation}
where $p$ and $q$ are on the $y$-axis.

Now, we observe a Whitney decomposition of the complement of the singular line in $\mathbb{G}$ and an associated Lipschitz partition of unity that enable us to extend $f$ to a global Lipschitz map on the Grushin plane. 

\begin{lemma}[A Whitney decomposition $W_{\Omega}$ of $\Omega=\mathbb{G}\setminus A$]
Let $A$ be the y-axis in $\mathbb{G}$. Then its complement $\Omega$ is the union of a sequence of boxes $Q_{n,k}=[\frac{1}{2^n},\frac{1}{2^{n-1}}] \times [\frac{k}{2^{2n}},\frac{k+1}{2^{2n}}]$ and $Q^{\prime}_{n,k}=[\text{-}\frac{1}{2^{n-1}},\text{-}\frac{1}{2^{n}}] \times [\frac{k}{2^{2n}},\frac{k+1}{2^{2n}}]$ for any integers ${n}$ and ${k}$, whose interiors are mutually disjoint and whose diameters are approximately proportional to their distances from $A$.
More precisely,
\begin{enumerate}
\item $\Omega = \cup_{n \in \mathbb{Z}}(W_{\Omega}(n) \cup W_{\Omega}(\overline{n}))$ where $W_{\Omega}(n)$ =$\{Q_{n,k}\}_{k \in \mathbb Z}$ and $W_{\Omega}(\overline{n})$=$\{Q^{\prime}_{n,k}\}_{k \in \mathbb Z}$
\item The interiors of any two boxes are mutually disjoint.
\item $\dist_{\mathbb{G}}(Q, A) \leq \diam_{\mathbb{G}}(Q) \leq 8 \dist_{\mathbb{G}}(Q, A)$.
\end{enumerate}
\end{lemma}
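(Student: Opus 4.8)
The plan is to treat the three conclusions separately: (1) and (2) are elementary statements about dyadic decompositions of the coordinate axes and carry no geometry, whereas (3) is extracted from the distance estimates (2.2) of Section~2 together with the trivial length bound used there to derive them.

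For (1): given $(x,y)\in\Omega$ we have $x\neq 0$; assuming $x>0$ (the case $x<0$ being symmetric under $x\mapsto -x$, which preserves the horizontal distribution), there is a unique integer $n$ with $2^{-n}\le x<2^{-(n-1)}$, and then a unique integer $k$ with $k\cdot 2^{-2n}\le y<(k+1)2^{-2n}$, so that $(x,y)\in Q_{n,k}$; conversely each $Q_{n,k}$ and each $Q'_{n,k}$ is contained in $\Omega$ since its $x$-coordinates omit $0$. This gives (1). For (2): over $n\in\mathbb{Z}$ the open intervals $(2^{-n},2^{-(n-1)})$ are pairwise disjoint (consecutive ones abut only at $2^{-(n-1)}$); for fixed $n$ the open intervals $(k\cdot 2^{-2n},(k+1)2^{-2n})$ are pairwise disjoint over $k$; and the positive boxes $Q_{n,k}$ are separated from the negative boxes $Q'_{n,k}$ by the line $x=0$. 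Hence all the interiors are mutually disjoint.

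For (3), fix $Q=Q_{n,k}$; the bounds for $Q'_{n,k}$ then follow from the isometry $x\mapsto -x$. The box $Q$ has Euclidean side length $2^{-n}$ in the $x$-direction and $2^{-2n}$ in the $y$-direction. I first claim $\dist_{\mathbb{G}}(Q,A)=2^{-n}$. For the lower bound, writing a horizontal curve as $\gamma=(X,Y)$ with $\dot\gamma=a X_1+b X_2$ for functions $a,b$ of $t$, one has $\mathrm{length}(\gamma)=\int_0^1\sqrt{a^2+b^2}\,dt\ge\int_0^1|a|\,dt\ge|X(1)-X(0)|$, and this bound --- the inequality $(*)$ of Section~2 --- remains valid even when $\gamma$ meets the singular line; hence $d_{\mathbb{G}}(p,a_0)\ge x_1\ge 2^{-n}$ for every $p=(x_1,y_1)\in Q$ and $a_0=(0,y_0)\in A$. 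For the upper bound, the segment $t\mapsto(t,y_1)$, $t\in[0,2^{-n}]$, is horizontal of sub-Riemannian length $2^{-n}$ and joins $(0,y_1)\in A$ to $(2^{-n},y_1)\in Q$. Next, for $p,q\in Q$ one has $|x_1-x_2|\le 2^{-n}$ and $\sqrt{|y_1-y_2|}\le\sqrt{2^{-2n}}=2^{-n}$, so the upper estimate in (2.2) gives $d_{\mathbb{G}}(p,q)\le 4(2^{-n}+2^{-n})=8\cdot 2^{-n}$; thus $\diam_{\mathbb{G}}(Q)\le 8\cdot 2^{-n}=8\,\dist_{\mathbb{G}}(Q,A)$. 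Finally, choosing $p$ on the face $\{x=2^{-n}\}$ and $q$ on the face $\{x=2^{-(n-1)}\}$ of $Q$, inequality $(*)$ gives $d_{\mathbb{G}}(p,q)\ge|x_1-x_2|=2^{-n}=\dist_{\mathbb{G}}(Q,A)$, so $\diam_{\mathbb{G}}(Q)\ge\dist_{\mathbb{G}}(Q,A)$, completing (3).

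The only step needing any care is getting the constant $8$ on the nose, which is why one must use the elementary inequality $\mathrm{length}(\gamma)\ge|x_1-x_2|$ both to identify $\dist_{\mathbb{G}}(Q,A)$ exactly and to produce $\diam_{\mathbb{G}}(Q)\ge\dist_{\mathbb{G}}(Q,A)$, together with the sharp upper estimate of (2.2); the Riemannian lower estimate in (2.2) has constants too lossy for this. All the remaining verifications are routine.
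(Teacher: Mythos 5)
Your proof is correct and takes essentially the same route the paper intends: the paper offers no argument for this lemma beyond the remark that the constants in (3) "easily" follow from the distance estimates (2.2), and you supply exactly that computation, using (2.2) for the upper diameter bound and the elementary inequality $\mathrm{length}(\gamma)\ge|x_1-x_2|$ (the paper's $(*)$) to pin down $\dist_{\mathbb G}(Q,A)=2^{-n}$ and the lower diameter bound. Parts (1) and (2) are the same routine dyadic bookkeeping in both treatments.
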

We can easily get the constants in (3) from the distance estimates (2.2) of the Grushin plane. We emphasize that $\dist_{\mathbb G}$ and  $\diam_{\mathbb G}$ in this paper are distance and diameter with respect to the Carnot-Carath\'{e}odory distance. For $Q \in W_{\Omega}(n)$, parent  of $Q$ is the unique element $R$ in $W_{\Omega}(n-1)$  such that $Q \cap R$ contain an edge and ancestor is the parent of the parent. Also, child of $Q$ is the unique element $R$ in $W_{\Omega}(n+1)$  such that $Q \cap R$ contain an edge and descendant is the child of child. 
\begin{lemma}[A Lipschitz partition of unity]
For each $Q \in W_{\Omega}$, we denote by ${Q}^*$ the set of Whitney boxes which touch $Q$ and $Q^{**}$ the set of  Whitney boxes which touch $Q^*$.
Then, we can associate a Lipschitz partition of unity $\{ {\varphi}_{Q^*} \}$ with the following properties:
\begin{enumerate}
\item $0 \leq  {\varphi}_{Q^*} \leq 1 $,
\item ${\varphi}_{Q^*}|_{Q^*} \geq \frac{1}{C_1} >0 $ and  ${\varphi}_{Q^*}|_{(Q^{**})^c}= 0$,
\item ${\varphi}_{Q^*}$ is Lipschitz with constant $ C_2/ \diam_{\mathbb{G}}(Q)$
\item For every $p \in \Omega$, we have ${\varphi}_{Q^*}(p) \neq 0 $ for at most $C_3$ boxes $Q$,
\item $\sum_{Q\in W_{\Omega} }{\varphi}_{Q^*} =1.$
\end{enumerate}
Here, $C_1, C_2 $ and $C_3$ denote uniformly fixed constants independent of the choice of Whitney box. 
\end{lemma}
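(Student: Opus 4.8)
The plan is to carry out the classical Whitney construction of a Lipschitz partition of unity, but in the Carnot--Carath\'eodory metric. Throughout I write $d_Q:=\diam_{\mathbb{G}}(Q)$, and in properties (2)--(4) I read $Q^{*}$ and $Q^{**}$ as the \emph{unions} of the boxes in those families.

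\textbf{Bounded geometry of $W_{\Omega}$.} First I would record three facts with universal constants, all consequences of the explicit boxes $Q_{n,k},Q^{\prime}_{n,k}$, the comparability $\dist_{\mathbb{G}}(Q,A)\le d_Q\le 8\,\dist_{\mathbb{G}}(Q,A)$ from the Whitney decomposition lemma, and the distance estimate (2.2): (i) if $Q\cap R\neq\varnothing$ then $d_Q$ and $d_R$ are comparable (a box touching $Q$ lies within $d_{\mathbb{G}}$-distance $2d_Q$ of $A$, hence by the comparability has diameter $\le 16\,d_Q$, and symmetrically); (ii) consequently each box touches at most a universal number $N_{0}$ of boxes, so $|Q^{**}|\le N_{0}^{2}$ and every box in $Q^{**}$ has diameter comparable to $d_Q$; (iii) two boxes $S,R$ that do \emph{not} touch satisfy $d_{\mathbb{G}}(S,R)\ge c_{*}\max(d_S,d_R)$ for a universal $c_*>0$. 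Fact (iii) is the only place the anisotropy really enters: either the $x$-ranges of $S$ and $R$ are separated by a definite fraction of $\max(d_S,d_R)$, and then the term $\tfrac12|x_1-x_2|$ in the lower bound of (2.2) already suffices, or $S$ and $R$ lie in the same or adjacent generations, in which case their $y$-intervals are disjoint dyadic intervals of comparable length and the second term in the lower bound of (2.2) gives the estimate.

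\textbf{Construction.} Fix a small universal $\epsilon_{0}>0$ (any $\epsilon_{0}<c_{*}/16$ works) and set, for each $Q\in W_{\Omega}$,
\[
G_Q(p):=\max\!\left(0,\ 1-\frac{1}{\epsilon_{0}\,d_Q}\,\dist_{\mathbb{G}}\!\Bigl(p,\ \textstyle\bigcup_{R\in Q^{*}}R\Bigr)\right),\qquad p\in\Omega .
\]
Then $0\le G_Q\le 1$, $G_Q\equiv 1$ on $\bigcup_{R\in Q^{*}}R$, and $G_Q$ is $(\epsilon_{0}d_Q)^{-1}$-Lipschitz since $p\mapsto\dist_{\mathbb{G}}(p,E)$ is $1$-Lipschitz. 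If $p\in\{G_Q>0\}$ and $S$ is a box containing $p$, then $d_{\mathbb{G}}(S,R_{0})<\epsilon_{0}d_Q$ for some $R_{0}\in Q^{*}$; since $d_{R_{0}}\ge d_Q/16$ by (i), fact (iii) forces $S$ to touch $R_{0}$, whence $S\in Q^{**}$. Thus $\{G_Q>0\}\subseteq\bigcup_{S\in Q^{**}}S$, and the same argument shows that for a fixed $p$ at most $N_{0}^{2}$ of the functions $G_Q$ are nonzero at $p$. Now normalize: with $\Sigma:=\sum_{Q'\in W_{\Omega}}G_{Q'}$, put $\varphi_{Q^{*}}:=G_Q/\Sigma$. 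Every $p\in\Omega$ lies in some box, which contributes $1$ to $\Sigma(p)$, so $\Sigma\ge 1$, while $\Sigma(p)$ has at most $N_{0}^{2}$ nonzero summands, each $\le 1$, so $1\le\Sigma\le N_{0}^{2}$. Then property (5) and the bound $\varphi_{Q^{*}}\le 1$ are immediate; property (2) follows from $G_Q\equiv 1$ on $\bigcup Q^{*}$ together with $\Sigma\le N_{0}^{2}$ and the support statement $\{G_Q>0\}\subseteq\bigcup Q^{**}$; and property (4) follows from the count of nonzero $G_Q$ above.

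\textbf{The Lipschitz bound and the main obstacle.} Property (3) is the only real estimate. Since $\varphi_{Q^{*}}\le 1$, whenever $d_{\mathbb{G}}(p,q)\ge\epsilon_{0}d_Q$ one gets $|\varphi_{Q^{*}}(p)-\varphi_{Q^{*}}(q)|\le 1\le(\epsilon_{0}d_Q)^{-1}d_{\mathbb{G}}(p,q)$. If $d_{\mathbb{G}}(p,q)<\epsilon_{0}d_Q$, I would join $p$ to $q$ by a $d_{\mathbb{G}}$-geodesic, of length $<\epsilon_{0}d_Q$; by (i) the set $\{G_Q>0\}$ lies at $d_{\mathbb{G}}$-distance comparable to $d_Q$ from $A$, so if the geodesic meets $\{\varphi_{Q^{*}}>0\}$ at all, the whole geodesic stays in $\Omega$ and inside boxes of diameter comparable to $d_Q$. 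Along the geodesic $\varphi_{Q^{*}}=G_Q/\Sigma$ with $G_Q$ bounded and $(\epsilon_{0}d_Q)^{-1}$-Lipschitz, $\Sigma\ge 1$, and $\Sigma$ a sum of boundedly many functions $G_{Q'}$ each $(\epsilon_{0}d_{Q'})^{-1}$-Lipschitz with $d_{Q'}$ comparable to $d_Q$; the quotient rule, integrated along the geodesic, then yields $|\varphi_{Q^{*}}(p)-\varphi_{Q^{*}}(q)|\lesssim d_Q^{-1}\,d_{\mathbb{G}}(p,q)$, which is (3) with a universal $C_{2}$ (and if the geodesic never meets $\{\varphi_{Q^{*}}>0\}$ the difference is $0$). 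I expect fact (iii) --- the definite separation, measured in the Grushin metric, of Whitney boxes that do not touch --- to be the only step that genuinely uses the geometry of $\mathbb{G}$; everything else is the standard normalization. The one additional technical point hidden in this last paragraph is checking that a short geodesic starting near $\{\varphi_{Q^{*}}>0\}$ cannot reach the singular line $A$, which again reduces to the comparability $\dist_{\mathbb{G}}(\cdot,A)\sim d_Q$ on $\bigcup Q^{*}$ supplied by (i).
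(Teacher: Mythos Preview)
The paper does not supply a proof of this lemma; it is stated as a standard construction (the classical Whitney partition of unity, cf.\ Stein, adapted to the Grushin metric) and followed only by a remark explaining why the lower bound in (2) is needed later. Your proposal therefore goes well beyond what the paper does: you actually carry out the construction.

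Your argument is correct and is exactly the expected route. A couple of comments. First, the separation fact (iii) is the one genuinely metric step, and your justification of it is a little telegraphic; it would be worth spelling out that for boxes in non-adjacent generations the $x$-gap alone is already a fixed fraction of the larger diameter, while for boxes in the same or adjacent generation the $y$-intervals are disjoint dyadic intervals of comparable length and the second term in the lower bound of (2.2) gives $d_{\mathbb{G}}(S,R)\gtrsim 2^{-n}\approx d_S\approx d_R$. Second, in the Lipschitz estimate (3) you do not actually need to pass to a geodesic or integrate: once you know that $d_{\mathbb{G}}(p,q)<\epsilon_0 d_Q$ and $p\in\operatorname{supp}\varphi_{Q^*}$ force $\dist_{\mathbb{G}}(q,A)\approx d_Q$, every $Q'$ contributing to $\Sigma(p)$ or $\Sigma(q)$ has $d_{Q'}\approx d_Q$, and the two-point quotient estimate
\[
\Bigl|\tfrac{G_Q(p)}{\Sigma(p)}-\tfrac{G_Q(q)}{\Sigma(q)}\Bigr|\le |G_Q(p)-G_Q(q)|+|\Sigma(p)-\Sigma(q)|
\]
(using $\Sigma\ge 1$, $G_Q\le 1$) finishes directly. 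This sidesteps the need to check that short geodesics avoid $A$, though your check of that is also fine.
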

\begin{remark}
In this paper, we need a partition of unity $\varphi_{Q^*}$ that is bigger than $\frac{1}{C_1}$ on each $Q^*$ so that $\widetilde h_{Q^*}={\widehat h}\cdotp { \varphi_{Q^*}}$ in Lemma 3.6 is  bi-Lipschitz on $Q^*$, Lipschitz on $\Omega$ and supported on $Q^{**}$. 
\end{remark}

\section{Proof of Theorem 1}
We break the proof of the main theorem into two parts. In the first step, we extend a (bi)-Lipschitz map $f$ on the singular line to a global Lipschitz map $g$ on the Grushin plane using Whitney Lipschitz extension theorem. Next, we construct a co-Lipschitz map using a local and large scale argument in the sense of Whitney distance.

\subsection{Lipschitz extension Theorem on the Grushin plane}
\begin{lemma}[Lipschitz Extension Theorem \cite{Stein} ]
Let $A$ be the singular line of the Grushin plane $\mathbb{G}$ and $f$ be the bi-Lipschitz embedding of $(A, d_\mathbb{G})$ into $\mathbb{R}^3$. 
Suppose that $W_{\Omega}$ is the Whitney decomposition of $\Omega = \mathbb{G} \setminus A$ and $\{ \varphi_{Q^*} \}$ is the Lipschitz partition of unity. Then, 
\begin{equation}
g(p)= 
\begin{cases}
\sum_{Q \in W_{\Omega}}f(z_{Q}) \varphi_{Q^*}(p), & \text\;{for}\; p \in \Omega,\; z_Q\in A \;\text{ such that}\; \dist_{\mathbb{G}}(A, Q)=\dist_{\mathbb{G}}(z_Q, Q);\\
f(p), & \text{for}\; p \in A.
\end{cases}
\end{equation}
is a Lipschitz extension of $f$ on the Grushin plane.
\end{lemma}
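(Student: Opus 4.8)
The plan is to run the classical Whitney extension argument of \cite{Stein}, replacing Euclidean balls and distances throughout by Carnot--Carath\'eodory ones, with the geometry controlled by the distance estimates (2.2) and by the comparability part (3) of the Whitney decomposition lemma. I would argue in three steps.

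\emph{Step 1 (local structure of $g$).} For $p \in \Omega$, property (4) of the Lipschitz partition of unity makes the sum in (3.1) finite, with at most $C_3$ terms, and property (5) makes $g(p)$ a convex combination of the points $f(z_Q)$. If $\varphi_{Q^*}(p) \neq 0$ then $p \in Q^{**}$, and since $Q^{**}$ is a uniformly bounded collection of Whitney boxes touching $Q$, part (3) of the Whitney lemma gives $\diam_{\mathbb{G}}(Q) \approx \dist_{\mathbb{G}}(p, A) \approx d_{\mathbb{G}}(z_Q, Q)$ with absolute constants. Writing $r = \dist_{\mathbb{G}}(p, A)$ and letting $z \in A$ be a point nearest to $p$, it follows that $d_{\mathbb{G}}(z_Q, z) \lesssim r$ for every box $Q$ with $\varphi_{Q^*}(p) \neq 0$; and if $p' \in \Omega$ with $d_{\mathbb{G}}(p, p') \le r/10$, then every box seen by $p'$ also has diameter $\approx r$ and all the associated boundary points lie within $\lesssim r$ of $z$.

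\emph{Step 2 (estimate across $A$).} Fix $p \in \Omega$ and $q \in A$, and put $r = \dist_{\mathbb{G}}(p,A) \le d_{\mathbb{G}}(p,q)$. Since $\sum_Q \varphi_{Q^*}(p) = 1$ and the weights are nonnegative,
\[
|g(p) - f(q)| = \Big| \sum_{Q} \big(f(z_Q) - f(q)\big)\,\varphi_{Q^*}(p)\Big| \le \max_{Q :\, \varphi_{Q^*}(p) \neq 0} |f(z_Q) - f(q)|.
\]
For any such $Q$, Step 1 and the triangle inequality give $d_{\mathbb{G}}(z_Q, q) \le d_{\mathbb{G}}(z_Q, z) + d_{\mathbb{G}}(z, p) + d_{\mathbb{G}}(p, q) \lesssim r + d_{\mathbb{G}}(p,q) \lesssim d_{\mathbb{G}}(p,q)$, and since $z_Q, q \in A$ the bi-Lipschitz bound (2.3) for $f$ yields $|f(z_Q) - f(q)| \le L\, d_{\mathbb{G}}(z_Q,q) \lesssim L\, d_{\mathbb{G}}(p,q)$. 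Hence $|g(p) - f(q)| \lesssim L\, d_{\mathbb{G}}(p,q)$; in particular $g(p) \to f(q)$ as $p \to q$, so $g$ is a continuous extension of $f$, and it remains only to bound $g$ on $\Omega$.

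\emph{Step 3 (estimate inside $\Omega$).} Let $p, p' \in \Omega$ and $r = \dist_{\mathbb{G}}(p, A)$. If $d_{\mathbb{G}}(p,p') \ge r/10$, choose nearest points $z, z' \in A$ to $p, p'$; then $\dist_{\mathbb{G}}(p', A) \le r + d_{\mathbb{G}}(p,p') \lesssim d_{\mathbb{G}}(p,p')$, so $d_{\mathbb{G}}(z, z') \lesssim d_{\mathbb{G}}(p,p')$, and by Step 2 together with (2.3),
\[
|g(p) - g(p')| \le |g(p) - f(z)| + |f(z) - f(z')| + |f(z') - g(p')| \lesssim L\, d_{\mathbb{G}}(p,p').
\]
If instead $d_{\mathbb{G}}(p,p') < r/10$, fix a box $Q_0$ with $\varphi_{Q_0^*}(p) \neq 0$; using $\sum_Q \big(\varphi_{Q^*}(p) - \varphi_{Q^*}(p')\big) = 0$,
\[
g(p) - g(p') = \sum_{Q} \big(f(z_Q) - f(z_{Q_0})\big)\big(\varphi_{Q^*}(p) - \varphi_{Q^*}(p')\big),
\]
a sum with boundedly many nonzero terms, all with $\diam_{\mathbb{G}}(Q) \approx r$ and $d_{\mathbb{G}}(z_Q, z_{Q_0}) \lesssim r$ by Step 1. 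For each such term, (2.3) gives $|f(z_Q) - f(z_{Q_0})| \le L\, d_{\mathbb{G}}(z_Q, z_{Q_0}) \lesssim L r$, while property (3) of the partition of unity gives $|\varphi_{Q^*}(p) - \varphi_{Q^*}(p')| \le \tfrac{C_2}{\diam_{\mathbb{G}}(Q)}\, d_{\mathbb{G}}(p,p') \lesssim \tfrac{1}{r}\, d_{\mathbb{G}}(p,p')$. Multiplying and summing, $|g(p) - g(p')| \lesssim L\, d_{\mathbb{G}}(p,p')$. Combining the two cases with Step 2 shows $g$ is Lipschitz on all of $\mathbb{G}$, with constant depending only on $L$ and the Whitney constants $C_1, C_2, C_3$.

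The routine part is the Whitney bookkeeping, which is identical to the Euclidean case in \cite{Stein}. The one place that genuinely uses the Grushin geometry, and the step I expect to require the most care, is the comparability asserted in Step 1: that all Whitney boxes $Q$ with $\varphi_{Q^*}(p) \neq 0$ or $\varphi_{Q^*}(p') \neq 0$, for points $p, p'$ with $d_{\mathbb{G}}(p,p') \lesssim \dist_{\mathbb{G}}(p, A)$, have $\diam_{\mathbb{G}}(Q) \approx \dist_{\mathbb{G}}(p, A)$ and associated boundary points within $\lesssim \dist_{\mathbb{G}}(p,A)$ of one another --- all measured in the Carnot--Carath\'eodory metric. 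This must be read off from (2.2) and part (3) of the Whitney decomposition lemma rather than from Euclidean estimates; once it is in place, the inequalities above follow as stated.
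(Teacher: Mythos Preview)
Your argument is correct and follows the standard metric-space Whitney extension pattern. The paper's own proof takes a slightly different route for Step~3: rather than splitting into the near/far cases $d_{\mathbb G}(p,p')\gtrless r/10$ and using the difference formula $g(p)-g(p')=\sum_Q(f(z_Q)-f(z_{Q_0}))(\varphi_{Q^*}(p)-\varphi_{Q^*}(p'))$ directly, it first proves the pointwise bound $|X_ig(p)|\le C\cdot L$ on the horizontal derivatives (by the same telescoping trick you use, applied to $X_i\varphi_{Q^*}$), and then, for $p,q\in\Omega$, integrates along a minimizing geodesic $\gamma$ from $p$ to $q$, treating separately the case where $\gamma$ meets $A$ by splitting at a point of $A\cap\gamma$ and invoking the $\Omega$-to-$A$ estimate twice. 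The two approaches are interchangeable: yours is the purely metric version and never touches the vector fields $X_1,X_2$, which makes it transplant verbatim to any doubling metric space; the paper's version exploits the sub-Riemannian structure to get a clean local Lipschitz bound via $\sup_\gamma|(X_1g,X_2g)|$, at the cost of needing the geodesic/derivative machinery. Either way the constants depend only on $L$ and the Whitney data $C_1,C_2,C_3$, and your caveat about reading the Step~1 comparabilities from (2.2) and part~(3) of the Whitney lemma is exactly the point the paper encapsulates in its bulleted list inside the proof of the Claim.
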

We now claim that the following hold: 
\begin{claim}
If $p \in \Omega$, then $|X_ig(p)| \leq C \cdot L$ for $i=1,\;2$ where $C$ is a constant. 
\end{claim}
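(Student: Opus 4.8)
The plan is to estimate the horizontal derivatives $X_i g$ of the Whitney extension $g$ directly from its defining series, exploiting the fact that the partition of unity sums to $1$ so that constants can be subtracted freely. First I would fix $p \in \Omega$ and let $Q_0$ be a Whitney box containing $p$ (or one of the finitely many boxes with $p \in Q_0^{**}$). Using property (5) of Lemma~3.4, write
\begin{equation*}
g(p) = \sum_{Q \in W_\Omega} f(z_Q)\,\varphi_{Q^*}(p) = f(z_{Q_0}) + \sum_{Q \in W_\Omega} \bigl(f(z_Q) - f(z_{Q_0})\bigr)\varphi_{Q^*}(p),
\end{equation*}
so that applying $X_i$ kills the constant term and gives $X_i g(p) = \sum_Q (f(z_Q)-f(z_{Q_0}))\,X_i\varphi_{Q^*}(p)$. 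By property (4) of Lemma~3.4 only $C_3$ terms are nonzero at $p$, and for each such $Q$ the box $Q^{**}$ meets the box containing $p$, hence $Q$ is comparable in size and Whitney-distance to $Q_0$; in particular $z_Q$ and $z_{Q_0}$ are both within a bounded multiple of $\operatorname{diam}_{\mathbb G}(Q_0)$ of each other, so by the bi-Lipschitz bound (2.3) on the singular line, $|f(z_Q) - f(z_{Q_0})| \le C L\,\operatorname{diam}_{\mathbb G}(Q_0)$.

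Next I would bound $|X_i \varphi_{Q^*}(p)|$. Property (3) of Lemma~3.4 says $\varphi_{Q^*}$ is Lipschitz (with respect to $d_{\mathbb G}$) with constant $C_2/\operatorname{diam}_{\mathbb G}(Q)$; since the horizontal vector fields $X_1, X_2$ have unit length in the Carnot--Carath\'eodory metric, a Lipschitz bound in $d_{\mathbb G}$ translates into the same pointwise bound on $|X_i \varphi_{Q^*}|$. Combining, each of the at most $C_3$ surviving terms contributes at most
\begin{equation*}
|f(z_Q) - f(z_{Q_0})|\cdot |X_i \varphi_{Q^*}(p)| \;\le\; C L\,\operatorname{diam}_{\mathbb G}(Q_0) \cdot \frac{C_2}{\operatorname{diam}_{\mathbb G}(Q)} \;\le\; C' L,
\end{equation*}
where the last step uses that $\operatorname{diam}_{\mathbb G}(Q) \asymp \operatorname{diam}_{\mathbb G}(Q_0)$ for the relevant boxes (again by Lemma~3.3(3) and the fact that touching Whitney boxes have comparable size). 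Summing over the $C_3$ terms gives $|X_i g(p)| \le C_3 C' L =: C L$.

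The main obstacle — the step that needs the most care — is the geometric bookkeeping asserting that all boxes $Q$ with $p \in Q^{**}$, together with $Q_0$, have mutually comparable diameters and that their base points $z_Q$ lie within $O(\operatorname{diam}_{\mathbb G}(Q_0))$ of one another. This requires checking that the parent/child and "touching" relations among the explicit boxes $Q_{n,k}, Q'_{n,k}$ only ever change the scale index $n$ by a bounded amount, and then invoking the distance estimates (2.2)/(Lemma~3.3(3)) to convert Euclidean adjacency into Carnot--Carath\'eodory comparability. Once this uniform "nearness of neighbors" is in hand, the derivative estimate is a routine finite sum, and the constant $C$ depends only on the Whitney constants $C_1, C_2, C_3$ and not on $p$ or the chosen box.
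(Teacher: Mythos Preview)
Your proposal is correct and follows essentially the same approach as the paper's proof: subtract the constant $f(z_{Q_0})$ using $\sum_Q \varphi_{Q^*} = 1$, then bound each of the at most $C_3$ surviving terms using the Lipschitz estimate $|X_i\varphi_{Q^*}(p)|\le C_2/\diam_{\mathbb G}(Q)$, the bi-Lipschitz bound $|f(z_Q)-f(z_{Q_0})|\le L\,d_{\mathbb G}(z_Q,z_{Q_0})$, and the Whitney geometry $d_{\mathbb G}(z_Q,z_{Q_0})\lesssim \diam_{\mathbb G}(Q_0)\asymp\diam_{\mathbb G}(Q)$. (Note that the properties you cite as ``Lemma~3.4'' and ``Lemma~3.3(3)'' are, in the paper's numbering, the partition of unity Lemma~2.3 and the Whitney decomposition Lemma~2.2(3).)
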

\begin{proof}[Proof of the claim]
Suppose that $p\in Q \in W_{\Omega}$ and pick $z_Q \in A$ such that $\dist_{\mathbb G}(A, Q)=\dist_{\mathbb G}(z_Q, Q)$, then for i=1,2, 
\begin{equation}
 X_ig(p) =\sum_{Q\in W_{\Omega}}f(z_Q)X_i {\varphi_{Q^*}}(p)\\
         =\sum_{Q\in W_{\Omega}}(f(z_Q)-f(z))X_i{\varphi_{Q^*}}(p) \; 
\end{equation}
because  $\sum  {\varphi_{Q^*}}(p) =1$. 

Now, let us consider boxes $R \in W_{\Omega}$ such that $p \in Q \in R^{**}$. Then, we observe followings from a Lipschitz partition of unity and a Whitney decomposition.
\begin{itemize}
\item the number of such Whitney box $R$ is at most $C_3$.
\item $d_{\mathbb G}(z_R, z_Q) \leq C d_{\mathbb G}(z_Q, p)$ where $a=(32\cdot 8 +1)$.
\item $ \frac {1}{8} \diam_{\mathbb G}(Q) \leq d_{\mathbb G}(p,z_Q) \leq 2\diam_{\mathbb G}(Q)$.
\item $\frac{1}{16}\diam_{\mathbb G}(R) \leq \diam_{\mathbb G}(Q)\leq 16 \diam_{\mathbb G}(R)$.
\end{itemize}
Therefore, we arrive at
\begin{align*}
|X_ig(p)|& \leq C_2 \cdot L \sum_{R\in W_{\Omega} \text{and} \; p \in   R^{**}} d_{\mathbb G}(z_R, z_Q){\diam_{\mathbb G}(R)}^{-1}  \\ 
         & \leq C_2\cdot C_3\cdot L \cdot 34\diam_{\mathbb G}(Q){\diam_{\mathbb G}(R)}^{-1}\\
         & \leq C\cdot L.                       
\end{align*}
because $f$ is bi-Lipschitz with constant $L$ and $|X_i {\varphi_{R^*}}(p)  | \leq C_2 {\diam_G(R)}^{-1}$ and  $C $ is the constant $ C_2\cdot C_3\cdot 34 \cdot 16$.
\end{proof}
\begin{proof}[Proof of Lemma 3.1]Now, if $p \in \Omega$  and $q \in A$, then
\begin{align*}
|g(p)-g(q)| &= |\sum _{Q \in W_{\Omega}} f(z_Q){\varphi}_{Q^*}(p)-f(q)|\\
            &\leq  \sum _{Q \in W_{\Omega}}|f(z_Q)-f(q)||{\varphi}_{Q^*}(p)|\\
            &\leq \sup_{d_{\mathbb G}(q,z_Q) \leq ad_{\mathbb G}(p,q)}|f(z_Q)-f(q)| \\
            &\leq a\cdot L\cdot d_{\mathbb G}(p,q).
\end{align*}
In the case of  $p, \;q \in \Omega$, let us consider $\gamma$ a minimizing geodesic joining $p$ to $q$. If $\gamma$ does not touch the singular line, then, from the claim, we have
\begin{align*} 
|g(p)-g(q)| &\leq d_{\mathbb G}(p, q) \cdot  \sup_{p^{\prime} \in \gamma}|( X_1g(p^{\prime}),  X_2g(p^{\prime}))|\\
                             &\leq C\cdot L\cdot d_{\mathbb G}(p,q).
\end{align*}
Otherwise, we can find a point $p^{\prime} \in A $ on  $\gamma$ such that $d_{\mathbb G}(p^{\prime}, p) < d_{\mathbb G}(p,q)$ and $d_{\mathbb G}(p^{\prime}, q) < d_{\mathbb G}(p,q)$. Then, we can deduce that
\begin{align*}
|g(p)-g(q)| &\leq|g(p)-g(p^{\prime})|+|g(p^{\prime})-g(q)|\\
            &\leq a\cdot L\cdot d_{\mathbb G}(p, p^{\prime})+ a\cdot L\cdot d_{\mathbb G}(q, p^{\prime})\\
            &\leq 2\cdot a \cdot L\cdot d_{\mathbb G}(p,q).   
\end{align*}
\end{proof}

We remark that the map $g$ is not globally co-Lipschitz on $\Omega$. Actually, $g$ is a co-Lipschitz map for any two points $p$, $q$ which are in specific relation to each other and to $A$. See Lemma 3.2 (1). Thus, we introduce a relative distance map $d_W$ which is the key tool to construct a global co-Lipschitz map on $\Omega$. In this paper, W-locally co-Lipschitz means that it is co-Lipschitz for any two points in Whitney boxes whose Whitney distance is less than $2000L^2$ and W-large scalely co-Lipschitz means that it is co-Lipschitz for any two points in Whitney boxes whose Whitney distance is greater than $2000L^2$.

\begin{definition}
The Whitney distance map $d_W$ on $W_{\Omega} \times W_{\Omega}$ is
defined by
\begin{equation*}
d_{W}(Q, R) = \dfrac{\dist_{\mathbb G}(Q, R)}{\min(\diam_{\mathbb G}(Q), \diam_{\mathbb G}(R))}.
\end{equation*}
\end{definition}

By using this definition, we break $\Omega$ into two parts and construct co-Lipschitz map on those parts. More precisely, we make co-Lipschitz maps on one part, (1) for any $p \in Q$ and $q\in R$ such that $d_W(Q, R) > 2000L^2$ and on another part, (2) for any $p \in Q$ and $q\in R$ such that $d_W(Q, R) \leq 2000L^2$.

\subsection{W-large scale co-Lipschitz and global Lipschitz map on $\Omega$}
First, we construct a $W$-large scale co-Lipschitz and global Lipschitz map on $\Omega$. Roughly speaking, the following Lemma 3.2 means that the Whitney Lipschitz extension map guarantees W-large scalely co-Lipschitz when two points are relatively close to the singular line $A$. Also, when one point is close to the singular line and another point is far away from the singular line,  distance map from $A$ is a W-large scale co-Lipschitz map.
\begin{lemma}
For any $p \in Q$ and $q \in R$ where $d_W(Q, R)>2000L^2$, the  Lipschitz extension map $g$ and $\dist_{\mathbb G}( \cdot, A)$ guarantee W-large scale co-Lipschitz bounds. More precisely, 
\begin{enumerate}
 \item If $\dfrac {\dist_{\mathbb G}(Q, R)}{\max(\diam_{\mathbb G}(Q),\diam_{\mathbb G}(R))} \geq {1000L^2}$,\;then $|g(p)-g(q)| \geq C(L) \cdot d_{\mathbb G}(p, q)$.
 \item If $\dfrac{\dist_{\mathbb G}(Q, R)} {\max(\diam_{\mathbb G}(Q),\diam_{\mathbb G}(R))}  \leq {1000L^2}$,\;then $|\dist_{\mathbb G}(p, A)-\dist_{\mathbb G}(q, A)| \geq C(L) \cdot d_{\mathbb G}(p, q)$.
\end{enumerate}
\end{lemma}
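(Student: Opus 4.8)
The plan is to handle the two sub-cases separately, reducing each to a bound on the single scalar $\dist_{\mathbb{G}}(Q,R)$, to which $d_{\mathbb{G}}(p,q)$ is comparable in both regimes. Throughout I would normalize by assuming $\diam_{\mathbb{G}}(Q)\le\diam_{\mathbb{G}}(R)$, which is harmless since all hypotheses and conclusions are symmetric in $Q,R$; then the standing hypothesis $d_W(Q,R)>2000L^2$ reads $\dist_{\mathbb{G}}(Q,R)>2000L^2\diam_{\mathbb{G}}(Q)$. I would use repeatedly that, by the estimate (2.2) together with the $y$-translation invariance of $d_{\mathbb{G}}$ and the dilations $\delta_\lambda$, the distance to the singular line depends only on the first coordinate: $\dist_{\mathbb{G}}((x,y),A)=\kappa|x|$ for a universal constant $\kappa\in[\tfrac12,4]$. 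I also record the elementary bounds $\dist_{\mathbb{G}}(Q,R)\le d_{\mathbb{G}}(p,q)\le\diam_{\mathbb{G}}(Q)+\dist_{\mathbb{G}}(Q,R)+\diam_{\mathbb{G}}(R)$, and $d_{\mathbb{G}}(z_Q,p')\le 2\diam_{\mathbb{G}}(Q)$ for every $p'\in Q$ (from Lemma 2.2(3)).

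For part (1), suppose $\dist_{\mathbb{G}}(Q,R)\ge 1000L^2\max(\diam_{\mathbb{G}}(Q),\diam_{\mathbb{G}}(R))$. The first step is to show that $g$ barely moves $p$ off $f(z_Q)$: since $g(p)$ is a convex combination of the values $f(z_{Q'})$ over boxes $Q'$ with $p\in(Q')^{**}$, and each such $Q'$ is a Whitney neighbour of $Q$ within two steps, $z_{Q'}$ lies within a universal multiple of $\diam_{\mathbb{G}}(Q)$ of $z_Q$ — the same neighbour-comparison used in proving the Claim. As $f$ is $L$-Lipschitz on $A$ and $\sum\varphi_{(Q')^*}(p)=1$, this gives $|g(p)-f(z_Q)|\le C L\,\diam_{\mathbb{G}}(Q)$ and likewise $|g(q)-f(z_R)|\le C L\,\diam_{\mathbb{G}}(R)$. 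The second step uses that $f$ is bi-Lipschitz on $A$: $|f(z_Q)-f(z_R)|\ge\tfrac1L d_{\mathbb{G}}(z_Q,z_R)\ge\tfrac1L(\dist_{\mathbb{G}}(Q,R)-4\diam_{\mathbb{G}}(R))$. Feeding these into $|g(p)-g(q)|\ge|f(z_Q)-f(z_R)|-|g(p)-f(z_Q)|-|g(q)-f(z_R)|$ and using $\diam_{\mathbb{G}}(R)\le(1000L^2)^{-1}\dist_{\mathbb{G}}(Q,R)$ to absorb all error terms, I get $|g(p)-g(q)|\ge c(L)\,\dist_{\mathbb{G}}(Q,R)\ge\tfrac12 c(L)\,d_{\mathbb{G}}(p,q)$.

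For part (2), suppose $2000L^2\diam_{\mathbb{G}}(Q)<\dist_{\mathbb{G}}(Q,R)\le 1000L^2\diam_{\mathbb{G}}(R)$. Dividing these two inequalities gives $\diam_{\mathbb{G}}(R)>2\diam_{\mathbb{G}}(Q)$; but by the explicit form of the boxes in Lemma 2.2 and the identity $\delta_2(Q_{n,k})=Q_{n-1,k}$, the Whitney diameters form the discrete set $\{2^{-n}D_0\}_{n\in\mathbb{Z}}$, so $\diam_{\mathbb{G}}(R)/\diam_{\mathbb{G}}(Q)$ is a power of two, hence $\ge 4$. Thus $Q$ sits at least two Whitney levels below $R$, and crucially the first coordinates of points of $Q$ and of $R$ occupy disjoint dyadic shells $\{2^{-n}\le|x|\le 2^{-n+1}\}$ whose separation is comparable to the wider shell. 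Consequently $|x_q|-|x_p|$ is comparable to $\diam_{\mathbb{G}}(R)$, and by $\dist_{\mathbb{G}}(\cdot,A)=\kappa|x|$ this yields $|\dist_{\mathbb{G}}(p,A)-\dist_{\mathbb{G}}(q,A)|=\kappa(|x_q|-|x_p|)\ge c\,\diam_{\mathbb{G}}(R)$. Since $d_{\mathbb{G}}(p,q)\le\diam_{\mathbb{G}}(Q)+\dist_{\mathbb{G}}(Q,R)+\diam_{\mathbb{G}}(R)\le(\tfrac54+1000L^2)\diam_{\mathbb{G}}(R)$, I conclude $|\dist_{\mathbb{G}}(p,A)-\dist_{\mathbb{G}}(q,A)|\ge C(L)\,d_{\mathbb{G}}(p,q)$.

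I expect the main obstacle to be the constant-chasing rather than the structure: one must verify that the absolute constants coming from the partition of unity (the $C_1,C_2,C_3$ of Lemma 2.3 and the Whitney comparison constant $a$) and from the two-sided estimate (2.2) are small enough that the thresholds $2000L^2$ and $1000L^2$ genuinely dominate every error term — in part (1) the error $CL\,\diam_{\mathbb{G}}(R)$ must be beaten by the main term $\tfrac1L\dist_{\mathbb{G}}(Q,R)$, which is exactly why the threshold has to be proportional to $L^2$. It is also worth flagging why the dichotomy is cut using $\dist_{\mathbb{G}}(Q,R)/\max(\diam)$ rather than $/\min(\diam)$: this is precisely what forces $R$ to sit at a strictly larger Whitney scale than $Q$ in part (2), so that $\dist_{\mathbb{G}}(\cdot,A)$ separates $p$ and $q$ by an amount comparable to $\diam_{\mathbb{G}}(R)$ — not merely to the far smaller $\diam_{\mathbb{G}}(Q)$, which would be too weak.
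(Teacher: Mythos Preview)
Your proposal is correct and follows essentially the same approach as the paper: in part~(1) you use the triangle inequality through $f(z_Q)$ and $f(z_R)$, controlling $|g(p)-f(z_Q)|$ and $|g(q)-f(z_R)|$ by $L\,\diam_{\mathbb G}$ and the main term by the bi-Lipschitz lower bound for $f$ on $A$, and in part~(2) you show the Whitney scales of $Q$ and $R$ differ so that $\dist_{\mathbb G}(\cdot,A)$ separates $p$ and $q$ by $\approx\diam_{\mathbb G}(R)$, which dominates $d_{\mathbb G}(p,q)$. Your use of the explicit dyadic structure (ratios of Whitney diameters are powers of~$2$, hence $>2$ implies $\ge 4$) to force the shells to be disjoint is a slightly cleaner variant of the paper's comparison $\dist_{\mathbb G}(R,A)-(\dist_{\mathbb G}(Q,A)+\diam_{\mathbb G}(Q))$, but the substance is the same.
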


\begin{proof}
We can assume that $\diam_{\mathbb G}(R) \geq \diam_{\mathbb G}(Q)$ without loss of generality.
To prove the first case, choose $z, z^{\prime} \in A$ such that $ \dist_{\mathbb G}(A, Q) =\dist_{\mathbb G}(z,Q)$ and $ \dist_{\mathbb G}(A, R) =\dist_{\mathbb G}(z^{\prime}, R)$, then, by triangle inequality
\begin{equation*}
|g(p)-g(q)|\geq |g(z)-g(z^{\prime})|-|g(z_Q)-g(p)|-|g(z^{\prime})-g(q)| 
\end{equation*}
Now, $d_{\mathbb G}(p, z) \leq 2\diam_{\mathbb G}(Q) \leq \frac{1}{1000L^2}\dist_{\mathbb G}(Q, R) \leq \frac{1}{1000L^2} \cdot d_{\mathbb G}(p,q)$ asserts the following:
\begin{equation*}
|g(z)-g(p)| \leq \sup_{d_{\mathbb G}(z,z_Q)< ad_{\mathbb G}(z,p)} |f(z)-f(z_Q)|
            \leq a\cdot L \cdot d_{\mathbb G}(z, p)
            \leq \frac{a}{1000L}\cdot d_{\mathbb G}(p,q). 
\end{equation*}
Similarly, we can say that
\begin{equation*}
|g(q)-g(z^{\prime})| \leq a\cdot L \cdot d_{\mathbb G}(z^{\prime}, q) \leq \frac{a}{500L}\cdot d_{\mathbb G}(p,q). 
\end{equation*}
because $d_{\mathbb G}(q, z^{\prime}) \leq 2\diam_{\mathbb G}(R) \leq \frac{1}{500L^2}\cdot d_{\mathbb G}(p,q)$. 

Moreover, the fact that $g|_A = f$ is bi-Lipschitz implies
\begin{equation*}
|g(z)-g(z^{\prime})| \geq \frac{1}{L}\cdot d_{\mathbb G}(z,z^{\prime}) \geq \frac{1}{L}\cdot d_{\mathbb G}(p,q). 
\end{equation*}
Therefore, $|g(p)-g(q)| \geq C(L) \cdot d_{\mathbb G}(p,q)$.

For the second case, we observe that $\diam_{\mathbb G}(Q) \leq \frac{1}{1000L^2} \cdot \diam_{\mathbb G}(R)$ from   $d_W(Q, R) > 2000L^2$. Also, 
$\dist_{\mathbb G}(Q, R) \geq \dist_{\mathbb G}(R, A)-\dist_{\mathbb G}(Q, A) \geq \frac{1}{16}\diam_{\mathbb G}(R)$ implies that 
\begin{equation*}
d_{\mathbb G}(p,q) \leq \frac{1}{2000L^2}\cdot \diam_{\mathbb G}(R)+ \dist_{\mathbb G}(Q,R)+\diam_{\mathbb G}(R) \leq (\frac{16}{1000L^2}+17)\cdot \dist_{\mathbb G}(Q, R).
\end{equation*}
Thus, we have the following:
\begin{align*}
|\dist_{\mathbb G}(p, A)-\dist_{\mathbb G}(q, A)| & \geq |\dist_{\mathbb G}(R,A)- (\dist_{\mathbb G}(Q, A) +\diam_{\mathbb G}(Q))|\\
                               & \geq \frac{1}{8}\diam_{\mathbb G}(R)-2\diam_{\mathbb G}(Q)\\
                               & \geq \frac{1}{16} \diam_{\mathbb G}(R)\\
                               &\succsim \frac{1}{1000L^2}\cdot \dist_{\mathbb G}(Q, R)\\
                               & \succsim \frac{1}{1000L^2}\cdot d_{\mathbb G}(p,q).
\end{align*}
\end{proof}

Since we already showed that $g$ is globally Lipschitz on $\mathbb G$ and $\dist(\cdotp, A)$ is obviously globally Lipschitz on $\mathbb G$, $g \times \dist(\cdotp, A)$ is the desired $W$-large scale co-Lipschitz and global Lipschitz map on $\mathbb G$.

\subsection{W-local co-Lipschitz and global Lipschitz map on $\Omega$}
It remains to construct a $W$-local co-Lipschitz map. The dimension 4 is not enough to construct a co-Lipschitz map. Thus, we use a coloring map that gives additional dimension of the Euclidean space.
\begin{lemma}
 For any $Q \in W_{\Omega}$, the number of boxes $R$ in $W_{\Omega}$ such that $d_{W}(Q, R) < 2000L^2$ is finite.
\end{lemma}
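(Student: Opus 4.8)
The plan is to prove, first, that any Whitney box $R$ with $d_W(Q,R) < 2000L^2$ has Grushin diameter comparable to $\diam_{\mathbb{G}}(Q)$, with comparability constants depending only on $L$, so that $R$ is forced into boundedly many Whitney levels; and then to show that within a single level only finitely many boxes can be Whitney-close to $Q$.

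\textbf{Step 1 (two-sided diameter comparison).} Set $M = 2000L^2$ and assume $\dist_{\mathbb{G}}(Q,R) < M\min(\diam_{\mathbb{G}}(Q),\diam_{\mathbb{G}}(R))$. Pick $q^\ast\in Q$, $r^\ast\in R$ with $d_{\mathbb{G}}(q^\ast,r^\ast) = \dist_{\mathbb{G}}(Q,R)$, fix an arbitrary $p_0\in Q$, and run the triangle inequality through a near-optimal point of $A$ for $q^\ast$:
\[
\dist_{\mathbb{G}}(R,A) \le d_{\mathbb{G}}(r^\ast,q^\ast) + \dist_{\mathbb{G}}(q^\ast,A) \le \dist_{\mathbb{G}}(Q,R) + 2\diam_{\mathbb{G}}(Q) < (M+2)\diam_{\mathbb{G}}(Q).
\]
Then the defining inequality $\diam_{\mathbb{G}}(R)\le 8\dist_{\mathbb{G}}(R,A)$ of the Whitney boxes (Lemma~2.2(3)) gives $\diam_{\mathbb{G}}(R)\le 8(M+2)\diam_{\mathbb{G}}(Q)$. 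Exchanging the roles of $Q$ and $R$ in the same estimate yields $\tfrac18\diam_{\mathbb{G}}(Q)\le \dist_{\mathbb{G}}(Q,A) < (M+2)\diam_{\mathbb{G}}(R)$, hence $\diam_{\mathbb{G}}(R)\ge \tfrac1{8(M+2)}\diam_{\mathbb{G}}(Q)$. This argument is insensitive to whether $Q$ and $R$ lie on the same side of the singular line. Since a level-$m$ Whitney box $Q_{m,k}$ or $Q'_{m,k}$ has Grushin diameter comparable to $2^{-m}$ — immediate from the distance estimate (2.2) and the explicit coordinates of the boxes — the diameter comparison confines the level of $R$ to an interval of length $O(\log L)$ about the level $n$ of $Q$; in particular $R$ can belong to only finitely many levels.

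\textbf{Step 2 (finitely many boxes per level).} Fix such a level $m$. Every admissible $R$ at level $m$ satisfies $\dist_{\mathbb{G}}(Q,R) < M\diam_{\mathbb{G}}(Q)$, so, using the diameter bound from Step 1, every such $R$ is contained in one fixed bounded Grushin ball $B$ centered at $p_0$. Two ways to conclude: (i) \emph{explicitly}, estimate (2.2) shows that distinct level-$m$ boxes $Q_{m,k}$, $Q_{m,k'}$ satisfy $\dist_{\mathbb{G}}(Q_{m,k},Q_{m,k'})\gtrsim 2^{-m}\sqrt{|k-k'|}$ (the $y$-separation dominates while $x\approx 2^{-m}$ throughout), so the bound $\dist_{\mathbb{G}}(Q,R)\lesssim M\,2^{-m}$ forces $|k-k'|\lesssim M^2$, leaving finitely many candidates; or (ii) \emph{softly}, the Grushin plane is a proper metric space, $B$ has finite Lebesgue measure, and distinct Whitney boxes at level $m$ have disjoint interiors of Lebesgue measure $2^{-3m}$, so only finitely many of them can meet $B$. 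Either way, finitely many $R$ occur at each level, and summing over the finitely many levels of Step 1 proves the lemma.

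\textbf{Anticipated difficulty.} There is little genuine obstacle here: the one point deserving care is Step 1, namely extracting a two-sided comparison of $\diam_{\mathbb{G}}(Q)$ and $\diam_{\mathbb{G}}(R)$ from the one-sided Whitney-distance hypothesis and checking that it still applies to boxes sitting on opposite sides of $A$, where being Whitney-close does not force the connecting geodesic to avoid the singular line; the symmetric triangle-inequality argument above handles both cases at once. Step 2 is then routine given the distance estimates already recorded, or alternatively the properness of $(\mathbb{G}, d_{\mathbb{G}})$.
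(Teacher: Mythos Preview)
Your proof is correct and follows essentially the same two-step approach as the paper: first use the Whitney relation $\dist_{\mathbb{G}}(Q,A)\asymp\diam_{\mathbb{G}}(Q)$ to confine admissible $R$ to finitely many levels (the paper phrases this as bounding the number of ``ancestors and descendants''), then invoke the distance estimate (2.2) to count boxes within each level. Your write-up is in fact more explicit than the paper's, which only sketches both steps; the alternative measure argument you offer in Step~2 is a clean bonus.
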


\begin{proof}
For any fixed $Q \in W_{\Omega}(n_0) \subset W_{\Omega}$,
the number of Whitney boxes within the W- ball centered at $Q$ of radius $2000L^2$ is 
the sum of the numbers of Whitney boxes in $W_{\Omega}(n)$ and $W_{\Omega}(\overline{n})$ for all $n \in \mathbb Z$ that are within the W-ball centered at $Q$ of radius $2000L^2$.
That means
\begin{equation*}
\sharp \{R\in W_{\Omega}\;|\;d_W(Q, R)<2000L^2\} 
= \sum_{n\in \mathbb{Z}} \sharp\{R \in W_{\Omega}(n) \cup W_{\Omega}(\overline{n}) \;|\;d_W(Q, R)<2000L^2\}.
\end{equation*}

We can simply calculate from the third property of the Whitney decomposition that the numbers of maximal ancestors and descendants of $Q$ and reflection of $Q$ with respect to $y$-axis are finite constants depending only on $L$. Also, we can count the numbers of Whitney boxes in each $W_{\Omega}(n)$ and $W_{\Omega}(\overline{n})$ within $W$-$2000L^2$ ball centered at $Q$ from the distance estimate (2.2). They are finite constants depending only on $L$. Therefore, there are a finite number of Whitney boxes within the $W$-$2000L^2$ ball centered at $Q$. 

Moreover, the number of Whitney boxes within $W$-$2000L^2$ ball depends only $L$. Actually, the number of such boxes is approximately $L^4(\log_2 L)^2$.
\end{proof}

From the preceding lemma, for any  $Q \in W_{\Omega}$, $\sharp \{R \in W_{\Omega} \; | \;d_{W}(Q, R) < 2000L^2\}$ ball is at most a finite number $m(L)$. Therefore, there exists a coloring map such that any two boxes within $W$-distance $2000L^2$ have different colors.
\begin{lemma}
There exists a coloring map
\begin{equation*}
K : W_{\Omega} \longrightarrow \{1,2,3,\ldots, M \}\; \text{where}\; M \geq m(m-1)
\end{equation*}
such that any two boxes within $W$-distance ball of radius $2000L^2$ have different colors. In other words,
 if $R'$, $R''$ have $d_{W}(R', R'') <2000L^2$, then $K(R') \neq K(R'')$.
\end{lemma}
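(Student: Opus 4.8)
The plan is to reduce the existence of the coloring map to a standard graph-coloring argument. I would define a graph $\mathcal{G}$ on the vertex set $W_{\Omega}$ by joining $Q$ and $R$ with an edge precisely when $d_W(Q,R) < 2000L^2$ (and $Q \neq R$). A coloring map $K$ with the stated property is then exactly a proper vertex coloring of $\mathcal{G}$, so it suffices to bound the chromatic number of $\mathcal{G}$.

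The key step is to bound the maximum degree of $\mathcal{G}$. By Lemma 3.4, for every fixed $Q \in W_{\Omega}$ the set $\{R \in W_{\Omega} : d_W(Q,R) < 2000L^2\}$ is finite, with cardinality at most a constant $m = m(L)$ depending only on $L$ (in fact of order $L^4(\log_2 L)^2$, as noted there). Hence every vertex of $\mathcal{G}$ has degree at most $m - 1$. It is a classical fact that a graph of maximum degree $\Delta$ is $(\Delta+1)$-colorable — one simply orders the vertices (here $W_{\Omega}$ is countable, so well-order it as a sequence) and greedily assigns to each vertex the least color in $\{1,2,\ldots\}$ not used by any of its already-colored neighbors, of which there are at most $\Delta$. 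Thus $\mathcal{G}$ is properly colorable with at most $m$ colors, and a fortiori with any number $M \geq m(m-1)$ of colors (the bound $M \geq m(m-1)$ in the statement is more generous than strictly needed, presumably chosen with later use in mind). This yields the desired $K: W_{\Omega} \longrightarrow \{1,2,\ldots,M\}$ with $K(R') \neq K(R'')$ whenever $d_W(R',R'') < 2000L^2$.

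The only genuinely substantive input is the finiteness-with-uniform-bound statement, and that has already been established in Lemma 3.4 via the distance estimates (2.2) and the counting of maximal ancestors, descendants, and reflections; so here there is no real obstacle, only the bookkeeping of invoking it. One small point to be careful about: $d_W$ is symmetric in its two arguments, so the adjacency relation is symmetric and $\mathcal{G}$ is a genuine (undirected) graph; and since the neighborhood bound $m(L)$ in Lemma 3.4 is uniform over all $Q$, the greedy coloring terminates with a globally bounded palette rather than one whose size could grow along the sequence. If one wants to match the exact constant $M \geq m(m-1)$ claimed in the statement, it is enough to observe $m \leq m(m-1)$ for $m \geq 2$ (and the case $m \leq 1$ is trivial), so any proper coloring using at most $m$ colors can be regarded as a map into $\{1,\ldots,M\}$.
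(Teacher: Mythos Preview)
Your argument is correct and follows the same underlying idea as the paper: bound the degree of the ``conflict graph'' on $W_\Omega$ using the preceding finiteness lemma, then color greedily. The paper, however, packages the extension step via Zorn's lemma on the poset of partial colorings, whereas you exploit the countability of $W_\Omega$ to run a straightforward sequential greedy coloring; your version is more elementary (no appeal to choice is needed) and in fact yields the sharper bound of $m$ colors rather than the paper's $m(m-1)$, which, as you note, makes the hypothesis $M\geq m(m-1)$ comfortably sufficient. One small slip: the finiteness-with-uniform-bound result you invoke is Lemma~3.3 in the paper's numbering, not Lemma~3.4 (the latter is the statement you are proving).
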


\begin{proof}
We apply Zorn's lemma. Let us consider the partially ordered set $(\mathbf{P}, \leqslant)$ where
$\mathbf{P}$ is the collection of maps $k$ defined on $S \subset W_{\Omega}$ to  $ \{1,2, \ldots ,M\}$ so that all boxes $R \in S $ within $W$-$2000L^2$ ball have all different colors.  $(k, S) \leqslant  (k^{\prime}, S^{\prime})$ means $ k^{\prime}$ is a extension of $k$. $(S\subset S^{\prime} \in \mathbf{P} \; \text{and} \; k^{\prime} |_S = k|_S )$. 

Since $\bigcup S$ is the upper bound in $(\mathbf{P}, \leqslant)$, there is a maximal element $ \widehat{k} $. If domain of $ \widehat{k}$ is $W_{\Omega}$, then we can set $K$ as $\widehat{k}$. Otherwise, we can take $Q^{\prime} \in W_{\Omega} \setminus \text{domain}( \widehat{k} )$. Then, $Q^{\prime}$ is involved in $W$-$2000L^2$ ball centered at $Q^{\prime}$ and each Whitney boxes $R$ in $d_W(Q^{\prime}, R) <2000L^2$ is involved in $W$-$2000L^2$ ball centered at $R$. From Lemma 3.3, the total number of colors seen is less than or equal to  $m(m-1)$ that is finite. Therefore, when we choose $ M \geq m(m-1)$, then we can take $\widehat{k}(Q^{\prime})$ as a different color.
\end{proof}

Now, we are ready to make a $W$-local co-Lipschitz and global Lipschitz map on $\Omega$. We observe bi-Lipschitz embeddings on local patches into $\mathbb {R}^2$ with uniform bi-Lipschitz constant. Next, we will put together all patches to make a $W$-local co-Lipschitz map by assigning different colors to each Whitney boxes. Here, $\{e_1,\; e_2,\; \ldots,\; e_M \}$ is an orthonormal basis of $\mathbb R^{M}$.
 
\begin{lemma}
When we define the map $h$ from $\Omega$ into $\mathbb {R}^2$ as the following:
\begin{equation*}
  h(x,y) =(x, {\diam_{\mathbb G}(Q)}^{-1}\cdot y)
\end{equation*}
then, the restriction map on each $Q^*$,  $h|_{Q^*}$, is bi-Lipschitz from $Q^*$ into $\mathbb {R}^2$ with uniform bi-Lipschitz constant. Also, we have a map $\widehat h$ from $\Omega$ to $\mathbb {R}^2$ so that there exists $M_1 >1$ which is independent of Whitney boxes such that the restriction on each $Q^*$, $\widehat h|_{Q^*}(Q^*) \subset B(0, M_1\diam_{\mathbb G}(Q)) \setminus B(0, \frac{1}{M_1} \diam_{\mathbb G}(Q))$.
\end{lemma}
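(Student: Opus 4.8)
The plan is to verify the two assertions separately, treating the elementary map $h$ first and then modifying it to obtain $\widehat h$. For the first assertion, fix a Whitney box $Q \in W_\Omega$ and recall that $Q^*$ is the union of $Q$ together with the finitely many (at most $C_3$) Whitney boxes touching it. On $Q$ itself the map $h(x,y) = (x, \diam_{\mathbb G}(Q)^{-1} y)$ is an affine rescaling in the $y$-coordinate, and the key point is that $\diam_{\mathbb G}(Q)$ is comparable (with absolute constants, by Lemma 2.4(3) and the distance estimates (2.2)) to the side length $2^{-n}$ of $Q$ in the $x$-direction and to the square root of the side length $2^{-2n}$ in the $y$-direction. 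Thus on $Q$ the Euclidean metric $|h(p)-h(p')|$ is comparable, up to absolute constants, to $|x-x'| + \diam_{\mathbb G}(Q)^{-1}|y-y'|$, which by (2.2) is in turn comparable to $d_{\mathbb G}(p,p')$ once we note that on $Q$ we have $|x|, |x'| \approx \diam_{\mathbb G}(Q)$ and $|y-y'| \lesssim \diam_{\mathbb G}(Q)^2$, so the denominator $\sqrt{\min(|x|,|x'|)^2 + 4|y-y'|}$ in the lower bound of (2.2) is itself comparable to $\diam_{\mathbb G}(Q)$. First I would record this comparison on a single box, then observe that any box $R \in Q^*$ satisfies $\diam_{\mathbb G}(R) \approx \diam_{\mathbb G}(Q)$ (Lemma 2.4(3) again, since $\dist_{\mathbb G}(R,A) \approx \dist_{\mathbb G}(Q,A)$ for touching boxes), so the same two-sided estimate holds on all of $Q^*$ with uniform constants; since $Q^*$ is a finite union of such boxes of comparable size and $Q^*$ has diameter comparable to that of each piece, $h|_{Q^*}$ is bi-Lipschitz onto its image with an absolute constant.

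For the second assertion I would simply post-compose $h$ with a translation that depends on $Q$ in a controlled way, or equivalently replace the second coordinate so that the image of $Q^*$ is bounded away from the origin. Concretely, since $h|_{Q^*}$ is bi-Lipschitz with uniform constant, the image $h|_{Q^*}(Q^*)$ is a set of diameter comparable to $\diam_{\mathbb G}(Q)$; it already lies in an annulus $B(0, M_1 \diam_{\mathbb G}(Q)) \setminus B(0, \tfrac{1}{M_1}\diam_{\mathbb G}(Q))$ provided we arrange that $h|_{Q^*}$ stays at distance $\gtrsim \diam_{\mathbb G}(Q)$ from $0$. The easiest way to guarantee the lower bound is to exploit the $x$-coordinate: on $Q \in W_\Omega(n)$ we have $x \in [2^{-n}, 2^{-n+1}]$, so $|h(x,y)| \geq |x| \geq 2^{-n} \gtrsim \diam_{\mathbb G}(Q)$ automatically, and on the reflected boxes $Q'_{n,k}$ we similarly have $|x| \geq 2^{-n}$. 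The upper bound $|h(x,y)| \leq M_1 \diam_{\mathbb G}(Q)$ follows because on $Q^*$ both $|x| \lesssim \diam_{\mathbb G}(Q)$ and $|\diam_{\mathbb G}(Q)^{-1} y|$ is controlled: here one must be slightly careful, since $y$ itself is not small, but $h$ is defined with the diameter of the box to which the \emph{point} belongs; using instead the fixed normalization by $\diam_{\mathbb G}(Q)$ on all of $Q^*$ and subtracting a basepoint $(0, \diam_{\mathbb G}(Q)^{-1} y_Q)$ for a fixed corner $y_Q$ of $Q$ makes the second coordinate bounded by an absolute constant on $Q^*$. Taking $\widehat h(x,y) = (x, \diam_{\mathbb G}(Q)^{-1}(y - y_Q))$ with $Q$ the box containing $(x,y)$ then does the job, and one checks $\widehat h$ differs from $h$ by a bounded-on-$Q^*$ translation, hence is still bi-Lipschitz on $Q^*$ with a uniform constant.

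The main obstacle is the well-definedness and global control of $h$ (and $\widehat h$) across box boundaries: the definition invokes $\diam_{\mathbb G}(Q)$ where $Q$ is "the" box containing the point, but a point on a shared edge lies in two boxes, and more importantly a fixed point $p \in Q$ also lies in $R^*$ for every neighbor $R$ of $Q$, where the relevant normalization is $\diam_{\mathbb G}(R)$, not $\diam_{\mathbb G}(Q)$. The lemma only claims bi-Lipschitz behavior of the \emph{restriction} $h|_{Q^*}$ for each fixed $Q$, so strictly speaking $h|_{Q^*}$ should be read as: normalize by $\diam_{\mathbb G}(Q)$ throughout $Q^*$ (a single fixed scale), which is the reading I will adopt; the ambiguity at edges is then harmless because the two candidate diameters differ by a bounded factor. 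I expect the bulk of the remaining work to be the bookkeeping of absolute constants in the comparison between $d_{\mathbb G}$ and the rescaled Euclidean metric on a single box, all of which reduces to plugging the box dimensions into (2.2) and Lemma 2.4(3).
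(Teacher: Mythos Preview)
Your proposal is correct and follows essentially the same route as the paper. The paper parametrizes $Q^*$ explicitly (writing points as $(\tfrac{1+t}{2^{n+1}},\tfrac{s}{2^{2n+2}})$) and then invokes the distance estimate (2.2) to obtain $d_{\mathbb G}(p,q)\le |h(p)-h(q)|\le 24\, d_{\mathbb G}(p,q)$, while you reach the same two-sided bound more conceptually via the comparabilities $|x|\approx \diam_{\mathbb G}(Q)$ and $|y-y'|\lesssim \diam_{\mathbb G}(Q)^2$ on $Q^*$; for $\widehat h$ the paper also post-composes with a $y$-translation (namely $h'(x,y)=(x,y-\tfrac{k-1}{2^n})$), just as you do with $y_Q$, and your observation that the $x$-coordinate alone supplies the inner-annulus bound is exactly what makes that work. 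Your discussion of the interpretation of $h|_{Q^*}$ (fixed normalization by $\diam_{\mathbb G}(Q)$ throughout $Q^*$) is the correct reading and is in fact more careful than the paper's exposition on this point.
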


\begin{proof}
 Let $ Q \in W_{\Omega}(n)$ be $[ \frac{1}{2^n},\frac{1}{2^{n-1}}] \times [\frac{k}{2^{2n}},\frac{k+1}{2^{2n}} ]$ for some $k$. Then $Q^* \subset [ \frac{1}{2^{n+1}},\frac{1}{2^{n-1}}] \times [\frac{j}{2^{2n+2}},\frac{j+32}{2^{2n+2}} ]$ for some $j$ such that $[k, k+1] \subset [j, j+32]$.
For any $p=(x_1, y_1)=(\frac{1+t}{2^{n+1}}, \frac{s}{2^{2n+2}})$ and $q=(x_2, y_2)$=$(\frac{1+t^{\prime}}{2^{n+1}}, \frac{s^{\prime}}{2^{2n+2}}) \in Q^*$ where $0 \leq t,\; t^{\prime} \leq 3$  and $j \leq  s, \;s^{\prime} \leq j+32 $ , we obtain 
\begin{align*}
 |h(p)-h(q)| &\approx |x_1 -x_2|+{\diam_{\mathbb G}(Q)}^{-1}|y_1-y_2|\\
                     &\approx \frac{|t-t^{\prime}|}{2^{n+1}} + {2^n} \frac{|s-s^{\prime}| }{2^{2n+2}} 
\end{align*}
From the distance estimate (2.2) of the Grushin plane, we have the following:
\begin{equation}
d_{\mathbb G}(p, q) \leq |h(p)-h(q)|  \leq 24 d_{\mathbb G}(p,q).
\end{equation}
Thus, for each $Q^*$, when we consider $\widehat h$ as composition of $h$ and an isometric translation map, $h'(x,y)=(x,y\text{-}\frac{k-1}{2^n} )$, then we can conclude that
\begin{equation*}
\widehat h|_{Q^*}(Q^*) \subset B(0, M_1\diam_{\mathbb G}(Q)) \setminus B(0, \frac{1}{M_1} \diam_{\mathbb G}(Q)) 
\end{equation*}
\end{proof} 

\begin{lemma}
The following map $H$ from $\Omega$ into $(\mathbb {R}^2)^M$ given by
\begin{equation}
H(p) = \sum_{Q \in W_{\Omega}} \widetilde{h}_{Q^*}(p) \otimes e_{K(Q)},
\end{equation}
is a global Lipschitz and W-local co-Lipschitz map where $\widetilde h_{Q^*} $ is defined by ${\widehat h}\cdot { \varphi_{Q^*}}$. The ($W$-local) bi-Lipschitz constant depends only on $L$.  That is, 
\begin{equation*}
 |H(p)-H(q)| \geq  C(L) \cdot d_{\mathbb G}(p, q) 
\end{equation*}
 for  any  $p \in Q$ , $q \in R $ where $d_W(Q,R)< 2000L^2$.
\end{lemma}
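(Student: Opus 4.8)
The starting point is to record, with constants independent of the Whitney box $Q$, the properties of $\widetilde h_{Q^*}=\widehat h\cdot\varphi_{Q^*}$ furnished by Lemma 2.3, Lemma 3.5 and the Remark: (a) it is supported in $Q^{**}$; (b) it is $C$-Lipschitz on $\Omega$; (c) it is $C$-bi-Lipschitz on $Q^{*}$; and (d) $\tfrac1C\diam_{\mathbb G}(Q)\le|\widetilde h_{Q^*}(p)|\le C\diam_{\mathbb G}(Q)$ for every $p\in Q^{*}$. Since the vectors $e_i$ are orthonormal, for all $p,q\in\Omega$ we have
\begin{equation*}
|H(p)-H(q)|^{2}=\sum_{c=1}^{M}\Bigl|\sum_{K(Q)=c}\bigl(\widetilde h_{Q^*}(p)-\widetilde h_{Q^*}(q)\bigr)\Bigr|^{2}.
\end{equation*}
The global Lipschitz bound is then routine: at a point $p$ at most $C_3$ boxes $Q$ have $\varphi_{Q^*}(p)\ne0$, they all satisfy $p\in Q^{**}$ and hence are pairwise Whitney-close, so by Lemma 3.4 they carry distinct colours; therefore $|DH(p)|^{2}=\sum_{\varphi_{Q^*}(p)\ne0}|D\widetilde h_{Q^*}(p)|^{2}\le C_3C^{2}$. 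Integrating this along Carnot-Carath\'{e}odory geodesics that stay in $\Omega$, and using $|H(p)|\le C\dist_{\mathbb G}(p,A)$ (the same count) for geodesics meeting $A$, shows that $H$ is Lipschitz on $\mathbb G$, with $H\equiv0$ on $A$.

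For the $W$-local co-Lipschitz bound, fix $p\in Q$, $q\in R$ with $d_W(Q,R)<2000L^{2}$; we may assume $\diam_{\mathbb G}(Q)\le\diam_{\mathbb G}(R)$, and the Whitney axioms then give $\diam_{\mathbb G}(R)\le CL^{2}\diam_{\mathbb G}(Q)$ and $d_{\mathbb G}(p,q)\le CL^{2}\diam_{\mathbb G}(Q)$. The aim, in each configuration below, is to extract the bound $|H(p)-H(q)|\ge C(L)\,d_{\mathbb G}(p,q)$ from a single colour block (or, in the last configuration, from the fact that $H(p)$ and $H(q)$ occupy disjoint families of colour blocks). \emph{Large diameter gap}: if $\diam_{\mathbb G}(R)$ exceeds a sufficiently large universal multiple of $\diam_{\mathbb G}(Q)$, use the colour $K(R)$. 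Here $p\notin R^{**}$ (otherwise $Q$ would be Whitney-close to $R$), so $\widetilde h_{R^{*}}(p)=0$; and any box $Q'\ne R$ of colour $K(R)$ with $p\in Q'^{**}$ or $q\in Q'^{**}$ must in fact have $p\in Q'^{**}$ (if $q\in Q'^{**}$ then $d_W(R,Q')$ is bounded, forcing $Q'=R$), hence $\diam_{\mathbb G}(Q')\le C\diam_{\mathbb G}(Q)$ and $q\notin Q'^{**}$. So this block equals $-\widetilde h_{R^{*}}(q)+\sum_{Q'}\widetilde h_{Q'^{*}}(p)$, of norm at least $\tfrac1C\diam_{\mathbb G}(R)-C_3C\diam_{\mathbb G}(Q)\ge\tfrac1{2C}\diam_{\mathbb G}(R)\ge C(L)\,d_{\mathbb G}(p,q)$. \emph{Comparable diameters, same side of $A$}: if $\diam_{\mathbb G}(R)\le C\diam_{\mathbb G}(Q)$ and $p,q$ lie on the same side of $A$, let $S$ be the ancestor of $Q$ of generation $n_Q-j_0$, where $2^{j_0}$ is a fixed large multiple of $L^{2}$; then $\{p,q\}\subset S^{*}$, every box meeting the support at $p$ or $q$ is within Whitney distance $CL^{2}$ of $S$ and hence coloured differently from $S$, so the $K(S)$-block is exactly $\widetilde h_{S^{*}}(p)-\widetilde h_{S^{*}}(q)$, of norm $\ge\tfrac1C d_{\mathbb G}(p,q)$ by (c). \emph{Comparable diameters, opposite sides of $A$}: then $d_{\mathbb G}(p,q)\ge\dist_{\mathbb G}(p,A)+\dist_{\mathbb G}(q,A)\ge C^{-1}\diam_{\mathbb G}(Q)$; the boxes contributing to $H(p)$ and those contributing to $H(q)$ sit on opposite sides of $A$ and (enlarging the colouring scale) carry disjoint colours, so $|H(p)-H(q)|^{2}=|H(p)|^{2}+|H(q)|^{2}\ge|\widetilde h_{Q^{*}}(p)|^{2}\ge C^{-2}\diam_{\mathbb G}(Q)^{2}\ge C(L)^{2}d_{\mathbb G}(p,q)^{2}$.

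I expect the main obstacle to be the bookkeeping that makes a colour block pure, rather than any single estimate. Two things must be arranged at once: $S$ must be chosen just large enough that $S^{*}$ contains both $p$ and $q$ --- generation $n_Q-j_0$ with $2^{j_0}\asymp L^{2}$ --- so that $\diam_{\mathbb G}(S)$ exceeds the diameters of its Whitney neighbours by no more than a power of $L$; and the colouring radius $\asymp L^{2}$ must dominate every absolute constant produced by the Whitney combinatorics, so that "$2000L^{2}$" has to be understood as "a sufficiently large fixed multiple of $L^{2}$", with the radius in Lemma 3.4, the $W$-local range here, and the dimension $M$ (still polynomial in $L$) chosen compatibly. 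The opposite-sides case is the most delicate, because there is then no common Whitney box on which to invoke (c), and one must instead rely on the disjointness of colour families together with the crude magnitude bound (d). Granting these choices, the resulting co-Lipschitz constant is $\asymp L^{-2}$ while the Lipschitz constant is universal, so the $W$-local bi-Lipschitz constant of $H$ depends only on $L$, as asserted.
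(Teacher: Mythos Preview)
Your global Lipschitz argument is fine, and the ``large diameter gap'' and ``opposite sides'' cases are essentially correct (modulo harmless constant adjustments). The genuine gap is in your \emph{comparable diameters, same side} case: the ancestor $S$ at generation $n_Q-j_0$ does \emph{not} satisfy $\{p,q\}\subset S^*$. In this Whitney decomposition the boxes do not nest; the level-$m$ box $S$ lies in the vertical strip $[2^{-m},2^{1-m}]$, and $S^*$ only reaches the adjacent strips (levels $m\pm1$). Hence for $j_0\ge2$ the point $p\in Q$, whose $x$-coordinate lies in $[2^{-n_Q},2^{1-n_Q}]$, is already outside $S^*$; no choice of $j_0\asymp\log L$ can place both $p$ and $q$ inside a single $S^*$ when $d_{\mathbb G}(p,q)$ is of order $L^2\diam_{\mathbb G}(Q)$. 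So the colour block you try to isolate for the lower bound simply does not exist in this configuration.

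The paper's proof avoids this by never seeking a common $S^*$. It splits only on whether $q\in Q^{**}$. If $q\in Q^{**}$, there is a box $S\in Q^*$ with $p,q\in S^*$ (this needs only one Whitney step, no ancestor), and the bi-Lipschitz bound (c) on $S^*$ gives $|H(p)-H(q)|\ge|\widetilde h_{S^*}(p)-\widetilde h_{S^*}(q)|\ge C^{-1}d_{\mathbb G}(p,q)$. If $q\notin Q^{**}$, it uses exactly your magnitude bound (d): $\widetilde h_{Q^*}(q)=0$ while $|\widetilde h_{Q^*}(p)|\ge M_1^{-1}\diam_{\mathbb G}(Q)$, and then the hypothesis $d_W(Q,R)<2000L^2$ forces $d_{\mathbb G}(p,q)\le C L^2\diam_{\mathbb G}(Q)$, so $|H(p)-H(q)|\ge C(L)\,d_{\mathbb G}(p,q)$. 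This second branch already covers everything you split into ``large gap'', ``comparable, far apart'', and ``opposite sides'': once $q\notin Q^{**}$ the argument is uniform, and the only place genuine bi-Lipschitz control of $\widehat h$ is needed is the very local case $q\in Q^{**}$.
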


\begin{proof}
Since $\widetilde h_{Q^*}$ is bi-Lipschitz on $Q^*$ with uniform bi-Lipschitz constant, Lipschitz on $\Omega$ and supported on $Q^{**}$, the map $H$ is the $C_3$ finite sum of Lipschitz maps on $\Omega$. Thus, it is  Lipschitz on $\Omega$. Now, we will show that $H$ is  $W$-locally co-Lipschitz according to positions of two points $p$ and $q$.

If $p,\; q \in Q^*$, then $\widetilde h _{Q^*}$ is bi-Lipschitz on $Q^*$ and $Q$ is the Whitney box that shares same color at $p$ and $q$.
\begin{equation*}
| H(p)-H(q)| \geq | \widetilde h_{Q^*}(p)- \widetilde h_{Q^*}(q)|
    \geq \frac{1}{C_1}|\widehat h_{Q^*}(p)-\widehat h_{Q^*}(q)| \\
               \geq \frac{1}{C_1}d_{\mathbb G}(p, q)\; \text{from (3.3)}.
\end{equation*}

If $p \in Q,\; q\notin Q^{**}$, then $\widetilde h_{Q^*}(q) = 0$. Thus,
\begin{align*}
 | H(p)-H(q)| &\geq | \widetilde h_{Q^*}(p)- \widetilde h_{Q^*}(q)|=|\widetilde h_{Q^*}(p)|\\
              &\geq \frac{1}{M_1}\diam_{\mathbb G}(Q) \\
              &\geq \frac{1}{M_1}\min(\diam_{\mathbb G}(Q), \diam_{\mathbb G}(R))\\
              &\geq \frac{1}{M_1}\frac{1}{32(2000L^2+1)}d_{\mathbb G}(p, q).
\end{align*}
The last inequality is derived from a Whitney decomposition of the Grushin plane and $d_W(Q, R) < 2000L^2$. More precisely, we have
\begin{itemize}
\item  $ d_{\mathbb G}(p, q) \leq \dist_{\mathbb G}(p, A)+\dist_{\mathbb G}(q, A) \leq  4\max\{\diam_{\mathbb G}(Q), \diam_{\mathbb G}(R)\}$ 
\item  $2000L^2 \min\{\diam_{\mathbb G}(Q), \diam_{\mathbb G}(R)\}  > \dist_{\mathbb G}(Q, R) > |\dist_{\mathbb G}(R, A)- \dist_{\mathbb G}(Q, A)| \\
             \geq \frac{1}{8}\max\{ \diam _{\mathbb G}(Q), \diam_{\mathbb G}(R)\}-\min\{\diam_{\mathbb G}(Q), \diam_{\mathbb G}(R)\}$
\end{itemize}
Thus, we have 
\begin{equation*}
\min \{\diam_{\mathbb G}(Q), \diam_{\mathbb G}(R) \} 
\geq \frac{1}{32(2000L^2+1)}\cdot d_{\mathbb G}(p,q).
\end{equation*}
Therefore, $|H(p)-H(q)| \geq C(L)\cdot d_{\mathbb G}(p,q)$.

If $p \in Q,\; q\in Q^{**}$, then there is a $R \in Q^*$ so that $p,\;q \in R^*$ and $\widetilde h_{R^*}$ is bi-Lipschitz on $R^*$. Then, we can use same argument as the first case to conclude
\begin{equation*}
 |H(p)-H(q)| \geq |\widetilde h_{R^*}(p)-\widetilde h_{R^*}(q)| \geq \frac{1}{C_1}d_{\mathbb G}(p,q).
\end{equation*}
\end{proof}

\subsection{Global bi-Lipschitz embedding on the Grushin plane}
Finally, let us consider the map 
\begin{equation}
F(p)=g(p) \times H(p) \times \dist(p, A)\; \text{for}\; p\in \Omega. 
\end{equation}
Then, $F$ is  Lipschitz on $\Omega$ because $g$ and  $\text{dist}(\cdotp, A)$ are Lipschitz and $H$ is a finite sum of Lipschitz maps. Also, $F$ is  co-Lipschitz on $\Omega$ by Lemma 3.2 and Lemma 3.6. Therefore, $F$ is a bi-Lipschitz embedding on its dense set $\overline{ \Omega}=\mathbb G$ into $\mathbb {R}^3 \times\mathbb {R}^{2M} \times \mathbb {R}$. Moreover, the bi-Lipschitz constant depends only on the bi-Lipschitz constant $L$ of $f$

\begin{remark}
The dimension $2M+4$ of the Euclidean space depends only on the bi-Lipschitz constant $L$ of the bi-Lipschitz embedding $f$ of the singular line. However, the number of colors $M$ in Lemma 3.4 is somewhat large. The question, what is the minimal dimension of Euclidean space into which the Grushin plane bi-Lipschitzly embeds, remains open. 
\end{remark}

\bibliographystyle{plain}
\bibliography{BL_tex_Mac.bib}
\end{document}